\newtheorem{inizio}{Lemma}[section]
\newtheorem{theorem}[inizio]{Theorem}
\newtheorem{corollary}[inizio]{Corollary}
\newtheorem{proposition}[inizio]{Proposition}
\newtheorem{definition}[inizio]{Definition}
\newtheorem{remark}[inizio]{Remark}
\newcommand{\QQ}{\mathbb{Q}}
\newcommand{\qq}{\mathbb{Q}}
\newcommand{\ZZ}{\mathbb{Z}}
\newcommand{\pp}{\mathbb{P}}
\newcommand{\PP}{\mathbb{P}}
\newcommand{\oo}{\mathcal{O}}
\newcommand{\mL}{\mathcal{L}}
\newcommand{\EE}{\mathcal{E}}
\newcommand{\lr}{\longrightarrow}
\newcommand{\mP}{\mathbb{P}}
\begin{document}

\title[Deformations of product-quotient surfaces]
{Deformations of product-quotient surfaces and reconstruction of Todorov surfaces
via $\mathbb{Q}$-Gorenstein smoothing}

\author{Yongnam Lee}

\address{Department of Mathematics, Sogang University, Sinsu-dong, Mapo-gu,
Seoul 121-742, Korea}

\email{ynlee@sogang.ac.kr}

\author{Francesco Polizzi}

\address{Dipartimento di Matematica, Universit\`{a} della
Calabria, Cubo 30B, 87036 \\ Arcavacata di Rende (Cosenza), Italy}

\email{polizzi@mat.unical.it}

\subjclass[2010]{Primary 14J29; Secondary 14J10, 14J17}

\keywords{Surfaces of general type,
$\mathbb{Q}$-Gorenstein smoothing, product-quotient surfaces}

\date{}

\setcounter{section}{-1}

\begin{abstract}
We consider the deformation spaces of some singular
product-quotient surfaces $X=(C_1 \times C_2)/G$, where the curves
$C_i$ have genus $3$ and the group $G$ is isomorphic to 
$\mathbb{Z}_4$. As a by-product, we give a new construction of
Todorov surfaces with $p_g=1$, $q=0$ and $2\le K^2\le 8$ by using
$\QQ$-Gorenstein smoothings.
\end{abstract}

\maketitle

\section{Introduction} \label{sec.intro}

In \cite{To81}, Todorov constructed some surfaces of general type
with $p_g=1$, $q=0$ and $2\le K^2\le 8$ in order to give
counterexamples of the global Torelli theorem. Todorov surfaces
with $K^2=8-k$ are double covers of a Kummer surface in
$\mathbb{P}^3$ branched over a curve $D$, which is a complete
intersection of the Kummer surface with a smooth quadric surface
containing $k$ of its nodes, and over the remaining $16-k$ nodes.
Surfaces with $K^2=2$, and $p_g=1$ have been completely classified by
Catanese and Debarre \cite{CD89}, while some examples were constructed by Todorov.
C. Rito \cite{Rito09} gave a detailed study of Todorov surfaces with an involution.

Recently, H. Park, J. Park and D. Shin constructed simply
connected surfaces of general type with $p_g=1$, $q=0$ and $2\le
K^2\le 8$ by considering $\QQ$-Gorenstein smoothings of singular
K3 surfaces with special configurations of cyclic quotient
singularities, see \cite{PPS1}, \cite{PPS2}. Their construction
follows the method used by Lee and Park in the paper \cite{LP07},
where a simply connected surface of general type with $p_g=q=0$
and $K^2=2$ is constructed via the $\QQ$-Gorenstein smoothing of a
singular rational surface. For more details about these kind of
techniques, over a field of any characteristic, we refer the
reader to the work of Lee and Nakayama \cite{LN11}.

Moreover, Bauer, Catanese, Grunewald and Pignatelli constructed
many interesting examples of surfaces of general type with $p_g=0$
by considering the minimal desingularization of singular
product-quotient surfaces, see  \cite{BC04}, \cite{BCG08},
\cite{BCGP}, \cite{BP}. Similar methods are applied to surfaces of
general type with $p_g=q=1$  by Polizzi and others, see
\cite{Pol08}, \cite{Pol09}, \cite{CP09}, \cite{MP10}. These
results motivated us to start the investigation of
$\QQ$-Gorenstein smoothings of singular product-quotient surfaces.

Let us recall that a projective surface $S$ is called a
\emph{product-quotient surface} if there exists a finite group
$G$, acting faithfully on two smooth curves $C_1$ and $C_2$ and
diagonally on their product, so that $S$ is isomorphic to the
minimal desingularization of $X = (C_1 \times C_2)/G$. The surface
$X$ is called a \emph{singular model of a product-quotient
surface}, or simply a \emph{singular product-quotient surface}.

This paper focuses on the case $g(C_1)=g(C_2)=3$ and
$G=\mathbb{Z}_4$. More precisely, we assume that there exist two
simple $\ZZ_4$-covers $g_i \colon C_i \to \mathbb{P}^1$, both
branched in four points. Then the singular product-quotient
surface
\begin{equation*}
X:=(C_1\times C_2)/\mathbb{Z}_4
\end{equation*}
contains precisely $16$ cyclic quotient singularities; any of them
is either of type $\frac{1}{4}(1, \,1)$ or of type $\frac{1}{4}(1,
\, 3)$. Note that $\frac{1}{4}(1, \, 3)$ is a rational double
point, whereas $\frac{1}{4}(1, \,1)$ is a singularity of class
$T$, so both admit a \emph{local} $\mathbb{Q}$-Gorenstein
smoothing, see \cite{KSB88} or \cite[Sections 2-4]{Man08}. The problem is to
understand whether these local smoothings can be glued together in
order to have a \emph{global} $\mathbb{Q}$-Gorenstein smoothing of
$X$. We will show that in some cases this is actually possible.

This paper is organized as follows.

In Section \ref{sec.prel} we present some preliminaries and we set
up notation and terminology. In particular, we recall the
definitions of simple cyclic cover of a curve and of singular
product-quotient surface and we explain how to compute their basic
invariants.

In Section \ref{sec.basic.construction} we introduce the main
objects that we want to study, namely the singular product
quotient surfaces of the form  $X=(C_1 \times C_2)/G$, where
$g(C_1)=g(C_2)=3$, $G=\mathbb{Z}_4$ and $C_i \to C_i/G$ is a
simple cyclic cover for $i=1, \, 2$.

Section \ref{sec.example.Y} deals with the study of the singular
product-quotient surface $Y=(C_1 \times C_2)/H$, where $H$ is the
unique subgroup of $G$ isomorphic to $\mathbb{Z}_2$. By
construction,  $Y$ contains exactly $16$ ordinary double points as
singularities. By using the infinitesimal techniques introduced in
\cite{Pin81} and \cite{Cat89}, we prove that $\textrm{Def}(Y)$ is
smooth at $Y$, of dimension $18$ and $\textrm{ESDef}(Y)$ is smooth
at $[Y]$, of dimension $8$ (Proposition \ref{prop.def(Y)}).
Moreover, if $\mu \colon V \to Y$ is the minimal desingularization
of $Y$, we have
\begin{equation*}
\dim_{[V]} \textrm{Def}(V)=18, \quad h^1(\Theta_V)=24,
\end{equation*}
hence $\textrm{Def}(V)$ is singular at $[V]$; by \cite{BW74}
 this implies that the sixteen $(-2)$ curves of $V$ do not have
 independent behavior in deformations.

In Section \ref{sec.example.X} we discuss three examples of
singular product-quotient surface $X=(C_1 \times C_2)/G$ with
different $G$-action.
\begin{itemize}
\item In the first example we have $\textrm{Sing}(X)=16 \times
\frac{1}{4}(1, \, 3)$, so $X$ contains only rational double points
as singularities. We prove that $\textrm{Def}(X)$ and
$\textrm{ESDef(X)}$ are both smooth at $[X]$, of dimension $44$
and $2$, respectively (Propositions \ref{dim44} and
\ref{prop.example.1}).

The surface $X$ satisfies $h^0(\omega_X)=5$ and $K_X^2=8$; moreover
it is no difficult to see that the canonical map $\phi_K \colon X
\to \mathbb{P}^4$ is a birational morphism onto its image; by
\cite[Proposition 6.2]{Cat97} it follows that the general
deformation of $X$ is isomorphic to a smooth complete intersection
of bidegree $(2, \,4)$ in $\mathbb{P}^4$.

Moreover we have
\begin{equation*}
\dim_{[S]} \textrm{Def}(S)=44, \quad h^1(\Theta_S)=50,
\end{equation*}
hence $\textrm{Def}(S)$ is singular at $S$. This means that the
sixteen $A_3$-cycles of $S$ do not have independent behavior in
deformations.

\item In the second example we have  $\textrm{Sing}(X)=16 \times
\frac{1}{4}(1, \, 1)$. We show that there exist a
$\mathbb{Q}$-Gorenstein smoothing $\pi \colon \mathcal{X} \to T$
of $X$, whose base $T$ has dimension $12$, such that the general
fibre $X_t$ of $\pi$ is a minimal surface of general type whose invariants
are
\begin{equation*}
p_g(X_t)=1, \quad q(X_t)=0, \quad K_{X_t}^2=8.
\end{equation*}
Moreover $X_t$ is isomorphic to a Todorov surface with $K^2=8$
(Theorem \ref{teo.example.2}). By a slight modification of the
construction, it is possible to obtain all Todorov surfaces with
$2 \leq K^2 \leq 8$.

This is related to the existence of complex structures on rational
blow-downs of algebraic surfaces. More precisely, one can consider
the rational blow-down $S(t)$ of $t$ of the $(-4)$-curves in $S$,
where $1 \leq t \leq 16$. This means that one considers the normal
connected sum of $S$ with $t$ copies of $\mathbb{P}^2$,
identifying a conic in each $\mathbb{P}^2$ with a $(-4)$-curve in
$S$; then $S(t)$ is a symplectic $4$-manifold. On
can therefore raise the following:

\noindent{\bf Question.} Is it possible to give a complex structure on
$S(t)$ for $1\le t \le 16$, and to describe $S(t)$ when such a
complex structure exists?

Our results  answer affirmatively this question when $10\le t\le
16$; in these cases, indeed, one can give a complex structure to
the rational blow-down $S(t)$, which make it isomorphic to a
Todorov surface with $K^2=t-8$.

\item In the third example, we have $\textrm{Sing}(X)=8 \times
\frac{1}{4}(1, \, 1)+ 8 \times \frac{1}{4}(1, \, 3)$. Rasdeaconu
and Suvaina give an explicit construction of the minimal
desingularization $S$ of $X$, see \cite[Section 3]{RS06}; in fact,
they prove that $S$ is a simply connected, minimal elliptic
surface with no multiple fibres.

We show that there exists a $\mathbb{Q}$-Gorenstein smoothing of
$X$, although $H^2(\Theta_X)\ne 0$ and all the natural deformations
of the $G$-cover $u \colon X \to Q$ preserve the $8$ singularities
of type $\frac{1}{4}(1, \,1)$, see Proposition \ref{pro.example.3}.
Indeed we prove that a general surface $\bar X$ in the subfamily of
natural deformations of the $G$-cover of $X$ can be deformed to a
bidouble cover of $\PP^1\times\PP^1$ branched over three smooth
divisors of bidegree $(2, \, 2)$. By taking a general deformation of
these three divisors we obtain a $\QQ$-Gorenstein smoothing of $X$
which smoothes all the singularities. More generally, by using the
same method one can construct surfaces of general type with $p_g=3$,
$q=0$ and $K^2=k$ $(2\le k\le 8)$ by first taking a $\QQ$-Gorenstein
smoothing of $k$ singular points of type $\frac{1}{4}(1, \, 1)$ of
$\bar X$ and then the minimal resolution of the remaining $8-k$
singular points of the same type.
\end{itemize}

\noindent\textbf{Acknowledgments.} Both authors were partially supported by
the World Class University program through the National Research
Foundation of Korea funded by the Ministry of Education, Science
and Technology (R33-2008-000-10101-0). Both authors
appreciate M. Reid for valuable suggestions.

Yongnam Lee thanks KIAS for the invitation as an affiliate member;
part of this paper was worked out during his visit to KIAS.

Francesco Polizzi was partially supported by  the Progetto MIUR di
Rilevante Interesse Nazionale \emph{Geometria delle
Variet$\grave{a}$ Algebriche e loro Spazi di Moduli}. He thanks
the Department of Mathematics of Sogang University for the
invitation in the winter semester of the academic year 2009-2010
and the Mathematisches Institut-Universit\"at Bayreuth for the
invitation in the period October-November 2011. He is also
grateful to I. Bauer and F. Catanese for stimulating discussions
and useful suggestions.

\medskip

\textbf{Notation and conventions.}

We work over the field $\mathbb{C}$ of complex numbers.

By ``surface'' we mean a projective, non-singular surface $S$, and
for such a surface $\omega_S=\oo_S(K_S)$ denotes the canonical
class, $p_g(S)=h^0(S, \, \omega_S)$ is the \emph{geometric genus},
$q(S)=h^1(S, \, \omega_S)$ is the \emph{irregularity} and
$\chi(\mathcal{O}_S)=1-q(S)+p_g(S)$ is the \emph{Euler-Poincar\'e
characteristic}.

If $X$ is any (possibly singular) projective scheme, we denote by
$\textrm{Def}(X)$ the base of the Kuranishi family of deformations
of $X$ and by $\textrm{ESDef}(X)$ the base of the equisingular
deformations of $X$. The tangent spaces to $\textrm{Def}(X)$ and
$\textrm{ESDef}(X)$ at the point $[X]$ corresponding to $X$ are
given by $\textrm{Ext}^1(\Omega^1_Y, \, \mathcal{O}_Y)$ and
$H^1(\Theta_Y)$, respectively.

If $L$ is a line bundle $L$ on $X$, we use the notation $L^n$
instead of $L^{\otimes n}$ if no confusion can arise.

If $G$ is any finite abelian group, we denote by $\widehat{G}$ its
dual group, namely the group of irreducible characters of $G$.

\section{Preliminaries} \label{sec.prel}

\subsection{Simple cyclic covers of curves}\label{cyclic.cover}
Let $\Gamma$ be a smooth, projective curve and $B \subset \Gamma$ an
effective divisor such that $\oo_{\Gamma}(B)=\mathcal{L}^n$ for some
$\mathcal{L} \in \textrm{Pic}(\Gamma)$. Therefore there exists a
$\mathbb{Z}_n$-cover $g \colon C \to \Gamma$, totally branched over
$B$, which is called a \emph{simple cyclic cover}. We identify
$\mathbb{Z}_n$ with the group of $n$-th roots of unity, namely
$\mathbb{Z}_n = \langle \zeta \rangle$, where $\zeta$ is a primitive
$n$-th root. The dual group $\widehat{\mathbb{Z}}_n$ is isomorphic
to $\mathbb{Z}_n$, and it is generated by the character $\chi_1
\colon \mathbb{Z}_n \to \mathbb{C}$ such that
$\chi_1(\zeta)=\zeta^{-1}$. We will write $\chi_j$ instead of
$\chi_1^j$; then $\chi_j(\zeta)=\zeta^{-j}$. The group
$\mathbb{Z}_n$ acts naturally on $g_*\oo_C$, so there is a canonical
splitting
\begin{equation}\label{eq.decomp.O}
g_* \oo_C = \oo_{\Gamma} \oplus \mL^{-1} \oplus \ldots \oplus
\mL^{-(n-1)},
\end{equation}
where the summand $\mL^{-j}$ is the eigensheaf $(g_*
\oo_C)^{\chi_j}$ corresponding to the character $\chi_j$.

Similarly, $\mathbb{Z}_n$ acts naturally on $g_*\omega_C$ and
$g_*\omega_C^2$, giving the following decompositions (see
\cite{Pa91} and \cite[Section 2]{Cat89}):
\begin{equation}\label{eq.decomp.omega}
\begin{split}
g_* \omega_C & = \omega_{\Gamma} \oplus (\omega_{\Gamma} \otimes
\mL)
\oplus \ldots \oplus (\omega_{\Gamma}\otimes \mL^{n-1}), \\
g_* \omega_C^2 & = (\omega_{\Gamma}^2(B) \otimes \mL^{-1}) \oplus
\omega_{\Gamma}^2(B) \oplus \ldots \oplus
(\omega_{\Gamma}^2(B)\otimes \mL^{n-2}).
\end{split}
\end{equation}
In the equations~\eqref{eq.decomp.omega}, the eigensheaves
corresponding to $\chi_j$ are $\omega_{\Gamma} \otimes \mL^j$ and
$\omega_{\Gamma}^2(B) \otimes \mL^j$, respectively.

\subsection{Cyclic quotient singularities, Hirzebruch Jung resolutions
and singular product-quotient surfaces} \label{cyclic.quotient}
Let $n$ and $q$ be natural numbers with $0 < q < n$, $(n,q)=1$ and
let $\zeta$ be a primitive $n$-th root of unity. Let us consider
the action of the cyclic group $\mathbb{Z}_n=\langle \zeta
\rangle$ on $\mathbb{C}^2$  defined by $\zeta \cdot (x,\,
y)=(\zeta x,\, \zeta^q y)$. Then the analytic space
$X_{n,q}=\mathbb{C}^2 / \mathbb{Z}_n$ has a cyclic quotient
singularity of type $\frac{1}{n}(1,q)$, and $X_{n,q} \cong X_{n',
q'}$ if and only if $n=n'$ and either $q=q'$ or $qq' \equiv 1$
(mod $n$). The exceptional divisor on the minimal resolution
$\tilde{X}_{n,q}$ of $X_{n,q}$ is a Hirzebruch-Jung string, that
is to say, a connected union $E=\bigcup_{i=1}^k Z_i$ of smooth
rational curves $Z_1, \ldots, Z_k$ with self-intersection $\leq
-2$, and ordered linearly so that $Z_i Z_{i+1}=1$ for all $i$, and
$Z_iZ_j=0$ if $|i-j| \geq 2$. More precisely, given the continued
fraction
\begin{equation*}
\frac{n}{q}=[b_1,\ldots,b_k]=b_1- \cfrac{1}{b_2 -\cfrac{1}{\dotsb -
\cfrac{1}{\,b_k}}}, \quad b_i\geq 2,
\end{equation*}

\noindent the dual graph of $E$ is  {\setlength{\unitlength}{1.1cm}
\begin{center}
\begin{picture}(1,0.5)
\put(0,0){\circle*{0.2}} \put(1,0){\circle*{0.2}}
\put(0,0){\line(1,0){1}} \put(-0.3,0.2){\scriptsize $-b_1$}
\put(0.70,0.2){\scriptsize $-b_2$} \put(2,0){\circle*{0.2}}
\put(1,0){\line(1,0){0.2}} \put(1.3,0){\line(1,0){0.15}}
\put(1.55,0){\line(1,0){0.15}} \put(1.8,0){\line(1,0){0.2}}
\put(3,0){\circle*{0.2}} \put(2,0){\line(1,0){1}}
\put(1.70,0.2){\scriptsize $-b_{k-1}$} \put(2.70,0.2){\scriptsize
$-b_k$}
\end{picture}         \hspace{2.5cm}
\end{center}}

\vspace{.5cm}
\noindent (cf. \cite[Chapter II]{Lau71}). Notice that a rational
double point of type $A_n$ corresponds to the cyclic quotient
singularity $\frac{1}{n+1}(1,n)$.

\begin{definition} \label{numbers}
Let $x$ be a cyclic quotient singularity of type
$\frac{1}{n}(1,q)$. Then we set
\begin{equation*}
\begin{split}
\mathfrak{h}_x&=2- \frac{2+q+q'}{n}-\sum_{i=1}^k (b_i-2), \\
\mathfrak{e}_x&=k+1-\frac{1}{n}, \\
B_x&= 2 \mathfrak{e}_x - \mathfrak{h}_x = \frac{1}{n} (q + q') +
\sum_{i=1}^k b_i,
\end{split}
\end{equation*}
where $1\leq q' \leq n-1$ is such that $qq' \equiv 1$
$($\textrm{mod} $n)$.
\end{definition}

\begin{definition}\emph{\cite{BP}} \label{def-stand}
We say that a projective surface $S$ is a \emph{product-quotient
surface} if there exists a finite group $G$ acting faithfully on
two smooth projective curves $C_1$ and $C_2$ and diagonally on
their product, so that $S$ is isomorphic to the minimal
desingularization of $X:=(C_1 \times C_2)/G$. The surface $X$ is
called a \emph{singular model of a product-quotient surface}, or
simply a \emph{singular product-quotient surface}.
\end{definition}

From this definition it follows that a singular product quotient
surface contains a finite number of cyclic quotient singularities.

\begin{proposition}[cf. \cite{MP10}, Section 3] \label{invariants-S}
Let $S$ be a product quotient surface, minimal desingularization
of $X=(C_1 \times C_2)/G$. Then the invariants of $S$ are
\begin{itemize}
\item[$\boldsymbol{(i)}$] $K_S^2
=\frac{8(g(C_1)-1)(g(C_2)-1)}{|G|} + \sum \limits_{x \in
\emph{Sing}\; X} \mathfrak{h}_x$. \item[$\boldsymbol{(ii)}$]
$e(S)=\frac{4(g(C_1)-1)(g(C_2)-1)}{|G|}+\sum \limits_{x \in
\emph{Sing}\; X} \mathfrak{e}_x$. \item[$\boldsymbol{(iii)}$]
$q(S)=g(C_1/G)+g(C_2/G)$.
\end{itemize}
\end{proposition}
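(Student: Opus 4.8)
The plan is to compute each invariant by exploiting the fact that $S$ is the minimal desingularization of the quotient $X=(C_1\times C_2)/G$, comparing the relevant characteristic numbers of $S$ with those of the smooth product $C_1\times C_2$ and of $X$ itself. The three formulas will all follow the same template: first compute the invariant for the (possibly singular) quotient $X$ using the étale or ramified quotient by $G$, then account for the local contributions of the Hirzebruch-Jung resolution at each cyclic quotient singularity.

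\medskip

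For part $\boldsymbol{(iii)}$, I would argue that the irregularity is a birational invariant among smooth surfaces, so $q(S)=q(\widetilde{X})$ for any smooth model, and it suffices to compute $H^1(\mathcal{O})$ on the quotient. Since $G$ acts on $C_1\times C_2$ and cohomology of the quotient is the $G$-invariant part of the cohomology upstairs, I would use the Künneth decomposition
\begin{equation*}
H^1(\oo_{C_1\times C_2})=\big(H^1(\oo_{C_1})\otimes H^0(\oo_{C_2})\big)\oplus\big(H^0(\oo_{C_1})\otimes H^1(\oo_{C_2})\big),
\end{equation*}
and take $G$-invariants of each factor. The invariant part of $H^1(\oo_{C_i})$ is exactly $H^1(\oo_{C_i/G})$, whose dimension is $g(C_i/G)$; summing the two contributions gives $q=g(C_1/G)+g(C_2/G)$, provided the resolution introduces no new $H^1$, which holds because the exceptional locus of a Hirzebruch-Jung string consists of rational curves (so the singularities are rational).

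\medskip

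For parts $\boldsymbol{(i)}$ and $\boldsymbol{(ii)}$, I would pass through the singular surface $X$. The quotient map $C_1\times C_2\to X$ is generically $|G|$-to-one, so the orbifold-type computation gives $K_X^2=\frac{(K_{C_1\times C_2})^2}{|G|}=\frac{8(g(C_1)-1)(g(C_2)-1)}{|G|}$, using $K_{C_1\times C_2}^2=8(g_1-1)(g_2-1)$ and the fact that away from the branch divisor the canonical class pulls back equivariantly. For the topological Euler number, the quotient formula reads $e(X)=\frac{e(C_1\times C_2)}{|G|}+(\text{correction from fixed points})$, and I would want to organize the fixed-point contributions so that, after resolving, they combine with the string data into the single symbol $\mathfrak{e}_x=k+1-\frac{1}{n}$. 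The numbers $\mathfrak{h}_x$ and $\mathfrak{e}_x$ of Definition \ref{numbers} are precisely designed to encode the difference between $S$ and the naive quotient: $\mathfrak{h}_x$ measures the defect in $K^2$ coming from the discrepancies $b_i-2$ along the resolution, while $\mathfrak{e}_x$ records that each singular point of order $n$ is replaced by $k$ rational curves contributing $k+1$ to the Euler number minus the $\frac{1}{n}$ already counted downstairs. So the strategy is to verify that these local invariants match the standard Hirzebruch-Jung discrepancy and Euler-number corrections, which is essentially a continued-fraction bookkeeping.

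\medskip

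The main obstacle I anticipate is the careful accounting at the singular points: one must match the global quotient contribution with the local resolution data without double-counting, and in particular verify that the branch-divisor and fixed-point terms appearing in the naive $K^2$ and $e$ computations for $X$ are exactly absorbed into the definitions of $\mathfrak{h}_x$ and $\mathfrak{e}_x$. Since this is cited as following \cite{MP10}, I expect the cleanest route is to reduce to the known general formula for product-quotient surfaces there and simply specialize, rather than rederiving the continued-fraction identities for $K^2$ from scratch; the only genuinely new input needed is the identification of the invariant cohomology in part $\boldsymbol{(iii)}$, which is the straightforward Künneth argument above.
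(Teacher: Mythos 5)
The paper offers no proof of this proposition at all --- it is stated with the citation ``cf.\ \cite{MP10}, Section 3'' and nothing more --- so there is no argument to compare line by line; your sketch actually supplies more than the text does, and it is the standard argument (the one carried out in \cite{MP10}), correct in outline. Part (iii) as you present it is essentially complete: in characteristic zero $H^1(\mathcal{O}_X)=H^1(\mathcal{O}_{C_1\times C_2})^G$, K\"unneth plus triviality of the $G$-action on $H^0(\mathcal{O}_{C_i})$ reduces this to $H^1(\mathcal{O}_{C_1})^G\oplus H^1(\mathcal{O}_{C_2})^G=H^1(\mathcal{O}_{C_1/G})\oplus H^1(\mathcal{O}_{C_2/G})$, and rationality of cyclic quotient singularities transfers the count from $X$ to $S$. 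For (i), one phrase should be corrected: you speak of the canonical class pulling back equivariantly ``away from the branch divisor'', but the essential point is that there is \emph{no} branch divisor --- the diagonal action has only isolated fixed points, so $p\colon C_1\times C_2\to X$ is \'etale in codimension one, $K_{C_1\times C_2}=p^*K_X$ holds exactly as an equality of $\QQ$-divisors, and $K_X^2=8(g(C_1)-1)(g(C_2)-1)/|G|$ follows; were some element of $G$ to fix a curve, ramification terms would appear and the stated formula would be false. Your bookkeeping for (ii) is exactly right: with $F\subset C_1\times C_2$ the locus of points with nontrivial stabilizer, $e(X)=\bigl(e(C_1\times C_2)-|F|\bigr)/|G|+|\mathrm{Sing}(X)|$, the orbit over a point of type $\frac{1}{n}(1,q)$ consists of $|G|/n$ points, and each singular point is replaced in $S$ by a chain of $k$ rational curves of Euler number $k+1$, which assembles into $\mathfrak{e}_x=k+1-\frac{1}{n}$. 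The only step you defer entirely to citation is the identification of the local $K^2$-defect with $\mathfrak{h}_x$, i.e.\ that the discrepancy divisor $\sum_i a_iE_i$ of the Hirzebruch--Jung resolution satisfies $\bigl(\sum_i a_iE_i\bigr)^2=2-\frac{2+q+q'}{n}-\sum_i(b_i-2)$; since the proposition itself is quoted from \cite{MP10}, deferring that continued-fraction computation is consistent with the paper's own standard of proof.
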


Set $\Gamma_i:=C_i/G$ and let $g_i \colon C_i \to \Gamma_i$. The
group $G$ acts naturally on the sheaves ${g_i}_* \oo_{C_i}$,
${g_i}_* \omega_{C_i}$, ${g_i}_* \omega_{C_i}^2$. Assuming that
$G$ is \emph{abelian}, we can write the following generalizations
of \eqref{eq.decomp.O} and \eqref{eq.decomp.omega}:

\begin{equation*}
\begin{split}
{g_i}_* \oo_{C_i} & = \bigoplus_{\chi \in \widehat{G}} ({g_i}_*
\oo_{C_i})^{\chi}, \\  {g_i}_* \omega_{C_i} & = \bigoplus_{\chi
\in \widehat{G}} ({g_i}_* \omega_{C_i})^{\chi}, \\   {g_i}_*
\omega_{C_i}^2 & = \bigoplus_{\chi \in \widehat{G}} ({g_i}_*
\omega^2_{C_i})^{\chi},
\end{split}
\end{equation*}
where $(*)^{\chi}$ is the eigensheaf corresponding to the
character $\chi \in \widehat{G}$.

\section{The main construction} \label{sec.basic.construction}

Let us consider two smooth curves $C_1$, $C_2$ of genus $3$, such
that there are two \emph{simple} $\mathbb{Z}_4$-covers $g_i \colon
C_i \to \mathbb{P}^1$, both branched in $4$ points. In the rest of
the paper we write $G:=\mathbb{Z}_4 = \langle \zeta \, | \, \
\zeta^4=1 \rangle$, where $\zeta$ is a primitive fourth root of
unity; we also denote by $H$ the subgroup of $G$ defined by
$H:=\langle \zeta^2 \rangle \cong \mathbb{Z}_2$.

Now set $Z := C_1 \times C_2$ and consider the singular
product-quotient surface
\begin{equation} \label{dia.X}
X:=Z/G,
\end{equation}
which has exactly 16 isolated singular points, corresponding to the
fixed points of the $G$-action on $Z$. Let $\lambda \colon S \to X$
be the minimal resolution of singularities of $X$.

The $G$-cover $g_i$ factors through the double cover $h_i \colon
C_i \to E_i$, where $E_i:=C_i/H$. Note that $E_i$ is an elliptic
curve and that  the singular product-quotient surface
\begin{equation} \label{dia.Y}
Y:=Z/H
\end{equation}
contains sixteen cyclic quotient singularities of type
$\frac{1}{2}(1, \, 1)$, i.e. ordinary double points, as only
singularities. Let us denote by $\mu \colon V \to Y$  the minimal
desingularization of $Y$. We have a commutative diagram
\begin{equation} \label{dia.XY}
\xymatrix{ V \ar[r]^{\mu} &  Y \ar[dd]_s  \ar[rr]^v & &   E_1 \times E_2 \ar[dd]^t \\
  &  & Z  \ar[dr]^g \ar[dl]_p \ar[ul]^r  \ar[ur]_h &  \\
   S \ar[r]^{\lambda} & X \ar[rr]^u & &  \mathbb{P}^1 \times
   \mathbb{P}^1},
\end{equation}
where:
\begin{itemize}
\item $p \colon Z \to X$ and $r \colon Z \to Y$ are the natural
projections, so $s \colon Y \to X$ is a double cover (more
precisely, a $G/H$-cover) branched over the singular points of
$X$; \item $g :=g_1 \times g_2 \colon Z \to \mathbb{P}^1 \times
\mathbb{P}^1$ is a $G \times G$-cover branched on a divisor
$B\subset \mathbb{P}^1 \times \mathbb{P}^1$ of product type and of
bidegree $(4, \, 4)$; \item $h:=h_1 \times h_2 \colon Z \to E_1
\times E_2$ is a $H \times H$-cover branched on a divisor $\Delta
\subset E_1 \times E_2$ of product type and of bidegree $(4,
\,4)$; \item $u \colon X \to \mathbb{P}^1 \times \mathbb{P}^1$ is
a $G$-cover, whose branch locus coincides with $B$; \item $v
\colon Y \to E_1 \times E_2$ is a $H$-cover, whose branch locus
coincides with $\Delta$; \item $t \colon E_1 \times E_2 \to
\mathbb{P}^1 \times \mathbb{P}^1$ is a $G/H \times G/H$-cover
whose branch locus is $B$ and whose ramification locus is
$\Delta$.
\end{itemize}

Let us denote by $B_i$ the branch locus of $g_i \colon C_i \to
\mathbb{P}^1$ and by $\Delta_i$ the branch locus of $h_i \colon
C_i \to E_i$. Both $B_i$ and $\Delta_i$ consist of four points;
clearly
 $B=B_1 \times B_2$ and $\Delta=\Delta_1 \times \Delta_2$. From the
results of Section \ref{sec.prel} we infer that
\begin{itemize}
\item there is a natural action of $G$ on the sheaves ${g_i}_*
\oo_{C_i}$, ${g_i}_* \omega_{C_i}$, ${g_i}_* \omega_{C_i}^2$,
which gives decompositions:
\begin{equation} \label{eq.decomp.f}
\begin{split}
{g_i}_* \oo_{C_i}&=\mathcal{O}_{\mathbb{P}^1} \oplus
\mathcal{M}_i^{-1} \oplus \mathcal{M}_i^{-2}
\oplus  \mathcal{M}_i^{-3}; \\
{g_i}_* \omega_{C_i}&=\omega_{\mathbb{P}^1} \oplus
(\omega_{\mathbb{P}^1}\otimes \mathcal{M}_i) \oplus
(\omega_{\mathbb{P}^1} \otimes \mathcal{M}_i^2)
\oplus  (\omega_{\mathbb{P}^1}\otimes \mathcal{M}_i^3); \\
{g_i}_* \omega_{C_i}^2&=\omega_{\mathbb{P}^1}^2(B_i) \oplus
(\omega_{\mathbb{P}^1}^2(B_i) \otimes \mathcal{M}_i)\oplus
(\omega_{\mathbb{P}^1}^2(B_i) \otimes \mathcal{M}_i^2) \\
& \oplus
(\omega_{\mathbb{P}^1}^2(B_i) \otimes \mathcal{M}_i^{-1}),
\end{split}
\end{equation}
where $\mathcal{M}_i=\oo_{\mP^1}(1)$. Left to right, the direct
summands are the four eigensheaves corresponding to the four
characters $\chi_0$, $\chi_1$, $\chi_2$, $\chi_3$ of $G;$ \item
there is a natural action of $H$ on the sheaves ${h_i}_*
\oo_{C_i}$, ${h_i}_* \omega_{C_i}$, ${h_i}_* \omega_{C_i}^2$,
which gives decompositions:
\begin{equation} \label{eq.decomp.g}
\begin{split}
{h_i}_* \oo_{C_i}&=\oo_{E_i} \oplus \mathcal{L}_i^{-1}, \\
{h_i}_*\omega_{C_i}&=\omega_{E_i} \oplus (\omega_{E_i} \otimes \mathcal{L}_i), \\
{h_i}_*\omega_{C_i}^2&=\omega_{E_i}^2(\Delta_i) \oplus
(\omega_{E_i}^2(\Delta_i) \otimes \mL_i^{-1}),
\end{split}
\end{equation}
where $\mathcal{L}_i$ is a line bundle of degree $2$ on $C_i$ such
that $\mathcal{L}_i^{2}= \oo_{E_i}(\Delta_i)$. Left to right, the
direct summands correspond to the invariant and anti-invariant
eigensheaves for the $H$-action, respectively.
\end{itemize}

\section{Deformations of the singular product-quotient surface $Y=Z/H$} \label{sec.example.Y}

Let us consider again the surface $Y=Z/H$ defined in Section
\ref{sec.basic.construction}, together with its minimal
desingularization $\mu \colon V \to Y$. As we remarked in the
previous section, we have
\begin{equation*}
\textrm{Sing}(Y) = 16 \times \frac{1}{2}(1, \, 1).
\end{equation*}

\begin{proposition} \label{prop.inv.T}
$V$ is a minimal surface of general type whose invariants are
\begin{equation*}
\begin{split}
p_g(V)&=5, \quad q(V)=2, \quad K_V^2=16, \\
 h^1(\Theta_V)& =24, \quad h^2(\Theta_V)=16.
\end{split}
\end{equation*}
\end{proposition}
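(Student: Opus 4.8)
The plan is to read off the numerical invariants from Proposition \ref{invariants-S} together with Noether's formula, and then to compute $h^1(\Theta_V)$ and $h^2(\Theta_V)$ by reducing everything to $H$-invariant data on the product $Z=C_1\times C_2$. Since $E_i=C_i/H$ is elliptic, part (iii) gives $q(V)=g(E_1)+g(E_2)=2$. Each of the sixteen singular points is of type $\frac12(1,1)$, for which $k=1$, $b_1=2$, $q=q'=1$, so $\mathfrak{h}_x=0$ and $\mathfrak{e}_x=\tfrac32$. Parts (i)--(ii) then yield $K_V^2=\tfrac{8\cdot 2\cdot 2}{2}+0=16$ and $e(V)=\tfrac{4\cdot 2\cdot 2}{2}+16\cdot\tfrac32=32$, whence Noether gives $\chi(\oo_V)=(16+32)/12=4$ and $p_g(V)=\chi(\oo_V)-1+q(V)=5$. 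Because the nodes are rational double points one has $K_V=\mu^*K_Y$, and $K_Y$ is ample since $Z$ has ample canonical bundle and $H$ is finite; thus $K_V$ is nef and big and $V$ is a minimal surface of general type.

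For the tangent sheaf, Riemann--Roch for the rank-two bundle $\Theta_V$ reads $\chi(\Theta_V)=2\chi(\oo_V)+K_V^2-e(V)=-8$, so it suffices to compute $h^0(\Theta_V)$ and $h^2(\Theta_V)$. Write $D\subset V$ for the union of the sixteen disjoint $(-2)$-curves and $P\subset Z$ for the sixteen fixed points, so that $V\setminus D\cong(Z\setminus P)/H$. Restricting a global vector field, and dually (via Serre duality, $H^2(\Theta_V)\cong H^0(\Omega^1_V\otimes\omega_V)^\vee$) a global section of $\Omega^1_V\otimes\omega_V$, to this dense open set and extending across the codimension-two locus $P$ by Hartogs, I obtain injections $H^0(\Theta_V)\hookrightarrow H^0(Z,\Theta_Z)^H$ and $H^0(\Omega^1_V\otimes\omega_V)\hookrightarrow H^0(Z,\Omega^1_Z\otimes\omega_Z)^H$. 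Using $\Theta_Z=\mathrm{pr}_1^*\Theta_{C_1}\oplus\mathrm{pr}_2^*\Theta_{C_2}$, the K\"unneth formula, and the eigensheaf splittings \eqref{eq.decomp.g}, I will compute $H^0(Z,\Theta_Z)^H=0$ (because $H^0(C_i,\Theta_{C_i})=0$ for $g(C_i)=3$) and, from the eigenspace dimensions $h^0(\omega_{C_i})^{+}=1$, $h^0(\omega_{C_i})^{-}=2$, $h^0(\omega_{C_i}^2)^{+}=4$, $h^0(\omega_{C_i}^2)^{-}=2$ (where $\pm$ denote the $H$-invariant and anti-invariant parts), that $H^0(Z,\Omega^1_Z\otimes\omega_Z)^H$ has dimension $16$. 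This already gives $h^0(\Theta_V)=0$ and the upper bound $h^2(\Theta_V)\le 16$.

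The main obstacle is the reverse inequality $h^2(\Theta_V)\ge 16$, i.e. that every $H$-invariant section of $\Omega^1_Z\otimes\omega_Z$ extends holomorphically across $D$ to a section of $\Omega^1_V\otimes\omega_V$, with no residue along the exceptional curves. I will verify this by a local computation at each node: in coordinates where $\zeta^2$ acts as $-\mathrm{id}$, invariance forces such a section to have the shape $(f\,dx+g\,dy)\otimes(dx\wedge dy)$ with $f,g$ odd, and in the resolution chart $a=x^2$, $m=y/x$ one checks that $dx\wedge dy=\tfrac12\,da\wedge dm$ and that the coefficients of $f\,dx+g\,dy$ become holomorphic functions of $(a,m)$, so the form extends over the $(-2)$-curve. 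Hence $h^2(\Theta_V)=16$, and Riemann--Roch gives $h^1(\Theta_V)=h^0(\Theta_V)+h^2(\Theta_V)-\chi(\Theta_V)=0+16+8=24$. I expect the extension across the exceptional curves to be the only delicate point; all the rest is bookkeeping with the decompositions of Section \ref{sec.prel} and the restriction-plus-Hartogs argument on the dense open set $V\setminus D$.
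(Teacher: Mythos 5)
Your proposal is correct, and while it lands on the same core computation as the paper, it gets there by genuinely different means at the two key steps. For the numerical invariants both arguments coincide (Proposition \ref{invariants-S} plus Noether's formula). For minimality, the paper verifies the numerical criterion $h^0(\omega_V^2)=K_V^2+\chi(\oo_V)=20$ via the K\"unneth formula and the decomposition \eqref{eq.decomp.g}, whereas you descend positivity: since $r\colon Z\to Y$ is \'etale in codimension one, $r^*K_Y=K_Z$ is ample, hence $K_Y$ is ample and $K_V=\mu^*K_Y$ is nef and big; both are valid. For $h^2(\Theta_V)$, the paper invokes Burns--Wahl ($\mu_*\Theta_V=\Theta_Y$, whence $H^2(\Theta_V)\cong H^2(\Theta_Y)$ and $H^1(\Theta_V)\cong H^1(\Theta_Y)\oplus H^1_E(\Theta_V)$) together with the identification $H^2(\Theta_Y)^*\cong H^0(\Omega_Z^1\otimes\Omega_Z^2)^+$ from \cite{Cat89}; you instead apply Serre duality on the smooth surface $V$, obtain $h^0(\Omega^1_V\otimes\omega_V)\le\dim H^0(\Omega^1_Z\otimes\omega_Z)^H=16$ by restriction to $V\setminus D\cong(Z\setminus P)/H$ plus Hartogs, and prove the reverse inequality by the explicit extension check in the resolution chart $a=x^2$, $m=y/x$ --- which is right: $da\wedge dm=2\,dx\wedge dy$, and oddness of $f,g$ makes all coefficients holomorphic in $(a,m)$ (the symmetric chart $y^2$, $x/y$ being identical). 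The target count --- the $16$-dimensional invariant subspace of $H^0(\Omega^1_Z\otimes\omega_Z)$, i.e. the paper's $T_1\oplus T_2\oplus T_3\oplus T_4$ --- and the Riemann--Roch finish are the same in both proofs; you additionally make explicit the vanishing $h^0(\Theta_V)=0$, which the paper absorbs into the fact that $V$ is of general type. What your route buys is self-containedness: no appeal to \cite{BW74} or to the duality statement of \cite{Cat89}, at the price of a hands-on local computation. What the paper's route buys is that it computes $h^1(\Theta_Y)$ and $h^2(\Theta_Y)$ along the way (Corollary \ref{cor.Theta.Y}), which are needed later for the analysis of $\mathrm{Def}(Y)$ and which your argument, working directly on $V$, does not immediately yield.
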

\begin{proof}
The invariants $p_g(V)$, $q(V)$, $K_V^2$ can be computed by using
Proposition \ref{invariants-S}. Since $p_g(V)
>0$ and $K_V^2 >0$, it follows that
$V$ is a surface of general type.
 Let us denote by $H^0(\ast)^+$ and $H^0(\ast)^-$ the spaces
of invariant and anti-invariant sections for the $H$-action and by
$h^0(\ast)^+$ and $h^0(\ast)^-$ their dimensions. Since $Y$ has only
rational double points, K$\ddot{\textrm{u}}$nneth formula and the
third equality in \eqref{eq.decomp.g} give
\begin{equation*}
\begin{split}
&H^0(\omega_V^2)=H^0(\omega_Y^2)=H^0(\omega_Z^2)^+=H^0(\omega_{C_1}^2
\boxtimes \omega_{C_2}^2)^+
\\
&=(H^0({h_1}_* \omega_{C_1}^2)^+ \otimes  H^0({h_2}_*
\omega_{C_2}^2)^+ )\oplus (H^0({h_1}_* \omega_{C_1}^2)^- \otimes
H^0({h_2}_* \omega_{C_2}^2)^-) \\
& \cong \mathbb{C}^{20}.
\end{split}
\end{equation*}
This shows that $h^0(\omega_V^2)=K_V^2 + \chi(\oo_V)$, hence $V$
is a minimal model.

Since $Y$ is a normal surface, \cite[Proposition 1.2]{BW74} gives
 $\mu_*\Theta_V= \Theta_Y$. Therefore the argument in \cite[Section 1]{BW74} or \cite[p.
299]{Cat89} shows that there are two isomorphisms
\begin{equation} \label{H1}
H^1(\Theta_V) \cong H^1(\Theta_Y) \oplus H^1_E(\Theta_V), \quad
H^2(\Theta_V) \cong H^2(\Theta_Y),
\end{equation}
where $H^1_E(\Theta_V)$ denotes the local cohomology with support
on the exceptional divisor $E \subset V$.

By the second isomorphism in \eqref{H1}, we have
\begin{equation} \label{eq.decompH2}
H^2(\Theta_V)^* \cong H^2(\Theta_Y)^\ast = H^0(\Omega_Z^1 \otimes
\Omega_Z^2)^+=T_1 \oplus T_2 \oplus T_3 \oplus T_4,
\end{equation}
where
\begin{equation} \label{eq.decompH2-1}
\begin{split}
T_1 & =  H^0({h_1}_*\omega_{C_1}^2)^+ \otimes H^0({h_2}_*
\omega_{C_2})^+=H^0(\omega_{E_1}^2(\Delta_1)) \otimes
H^0(\omega_{E_2}), \\
T_2 & = H^0({h_1}_* \omega_{C_1})^+ \otimes H^0({h_2}_*
\omega_{C_2}^2)^+ = H^0(\omega_{E_1}) \otimes
H^0(\omega_{E_2}^2(\Delta_2)),\\
T_3 & = H^0({h_1}_* \omega_{C_1}^2)^- \otimes H^0({h_2}_*
\omega_{C_2})^-\\
&=H^0(\omega_{E_1}^2(\Delta_1) \otimes
 \mathcal{L}_1^{-1})\otimes
H^0(\omega_{E_2} \otimes \mathcal{L}_2), \\
T_4&=H^0({h_1}_* \omega_{C_1})^- \otimes H^0({h_2}_*
\omega_{C_2}^2)^- \\
& = H^0(\omega_{E_1} \otimes \mathcal{L}_1)
\otimes H^0(\omega_{E_2}^2(\Delta_2) \otimes \mathcal{L}_2^{-1}).
\end{split}
\end{equation}
Since $\dim T_i=4$ for all $i \in \{1,\,2,\,3, \, 4\}$, we infer
$h^2(\Theta_V)=h^2(\Theta_Y)=16$. By Riemann-Roch we have
$h^1(\Theta_V) - h^2(\Theta_V)=10 \chi(\oo_V) - 2 K_V^2=8$, so it
follows $h^1(\Theta_V)=24$.
\end{proof}

\begin{corollary} \label{cor.Theta.Y}
We have
\begin{equation*}
h^1(\Theta_Y)=8, \quad h^2(\Theta_Y)=16.
\end{equation*}
\end{corollary}
\begin{proof}
Since $h^2(\Theta_Y)=h^2(\Theta_V)$, the first equality follows from
Proposition \ref{prop.inv.T}. Furthermore, $E$ is the disjoint union
of sixteen $(-2)$-curves, hence \cite[Section 1]{BW74} implies
$H^1_E(\Theta_V) \cong \mathbb{C}^{16}$. Using $h^1(\Theta_V)=24$
and the first isomorphism in \eqref{H1} we obtain $h^1(\Theta_Y)=8$,
which completes the proof.
\end{proof}

By using the local-to-global spectral sequence of $\EE xt$-sheaves
we obtain an exact sequence

\begin{equation} \label{seq.sing.Y}
0 \to H^1(\Theta_Y) \lr \textrm{Ext}^1(\Omega^1_Y, \, \oo_Y) \lr
\mathcal{T}^1_Y \stackrel{\textrm{ob}_Y}{\lr} H^2(\Theta_Y),
\end{equation}
where $\mathcal{T}^1_Y:=H^0(\EE xt^1(\Omega^1_Y, \, \oo_Y))$.
Notice that $\mathcal{T}^1_Y$ is a skyscraper sheaf supported on
the sixteen nodes of $Y$, hence $\textrm{ob}_Y$ is a linear map
$$\textrm{ob}_Y \colon \mathbb{C}^{16} \to \mathbb{C}^{16}.$$ Thus
its kernel and its cokernel have the same dimension.

\begin{remark} \label{rem.dim=18}
The branch locus $\Delta$ of $v \colon Y \to E_1 \times E_2$ is a
polarization of type $(4, \, 4)$ on the abelian surface $E_1 \times
E_2$, in particular $h^0(\Delta)=16$. Since polarized abelian
surfaces form a $3$-dimensional family, it follows that the
deformation space $\emph{Def}(Y)$ has dimension at least $18$.
Therefore we have
\begin{equation*}
\dim \emph{Ext}^1(\Omega_Y^1, \, \oo_Y)=\dim T_{[Y]} \emph{Def}(Y)
\geq \dim_{[Y]} \emph{Def}(Y) \geq 18.
\end{equation*}
\end{remark}

\begin{proposition} \label{prop.ker.ob}
We have
\begin{equation*}
\dim \ker \emph{ob}_Y = \dim \emph{coker}\, \emph{ob}_Y =10.
\end{equation*}
\end{proposition}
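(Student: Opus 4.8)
The plan is to reduce everything to a rank computation for the linear map $\textrm{ob}_Y\colon \mathbb{C}^{16}\to\mathbb{C}^{16}$. Since source and target have the same dimension, $\dim\ker\,\textrm{ob}_Y=\dim\textrm{coker}\,\textrm{ob}_Y$ automatically, so it suffices to prove $\textrm{rank}\,\textrm{ob}_Y=6$. One inequality is immediate: from the exact sequence \eqref{seq.sing.Y} we get $\dim\textrm{Ext}^1(\Omega^1_Y,\,\oo_Y)=h^1(\Theta_Y)+\dim\ker\,\textrm{ob}_Y$, and since $h^1(\Theta_Y)=8$ by Corollary \ref{cor.Theta.Y} while Remark \ref{rem.dim=18} gives $\dim\textrm{Ext}^1(\Omega^1_Y,\,\oo_Y)\geq 18$, we conclude $\dim\ker\,\textrm{ob}_Y\geq 10$, i.e. $\textrm{rank}\,\textrm{ob}_Y\leq 6$. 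It therefore remains to show $\textrm{rank}\,\textrm{ob}_Y\geq 6$.

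To produce the reverse bound I would pass to the transpose. Using Serre duality $H^2(\Theta_Y)^*\cong H^0(\Omega^1_Y\otimes\omega_Y)$ and the decomposition $H^2(\Theta_Y)^*=T_1\oplus T_2\oplus T_3\oplus T_4$ from \eqref{eq.decompH2}--\eqref{eq.decompH2-1}, the dual map $\textrm{ob}_Y^{*}\colon \bigoplus_i T_i\to(\mathcal{T}^1_Y)^*=\bigoplus_{\textrm{nodes}}\mathbb{C}$ is, node by node, the local obstruction pairing of a co-obstruction against the smoothing generator of the corresponding $A_1$-point. By Grothendieck local duality at each node this pairing is a two-dimensional residue at the associated fixed point of $Z$, and for a class coming from $E_1\times E_2$ it depends only on the polar behaviour along $\Delta$. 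I only need a lower bound for the total rank, so it is enough to restrict $\textrm{ob}_Y^{*}$ to the subspace $T_1\oplus T_2$ and to compute the dimension of its image.

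On $T_1=H^0(\omega_{E_1}^2(\Delta_1))\otimes H^0(\omega_{E_2})$ the residue factors as the residue along $\Delta_1$ on the $E_1$-factor, multiplied by the (nonzero) contribution of the holomorphic differential on the $E_2$-factor; in particular the resulting vector in $\mathbb{C}^{16}$ is constant along the $E_2$-index. Using $\omega_{E_1}\cong\oo_{E_1}$, hence $\omega_{E_1}^2(\Delta_1)\cong\oo_{E_1}(\Delta_1)$ with $h^1(\oo_{E_1}(\Delta_1))=0$, the residue map sits in the exact sequence
\[
0\to H^0(\oo_{E_1})\to H^0(\omega_{E_1}^2(\Delta_1))\xrightarrow{\ \rho\ }\bigoplus_{i=1}^{4}\mathbb{C}\to H^1(\oo_{E_1})\to 0,
\]
so that, because $h^0(\oo_{E_1})=1$ and $h^1(\oo_{E_1})=1$, the map $\rho$ has rank $3$ (its kernel is the constants and its image is the single relation $\sum_i\textrm{res}=0$). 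Thus the image of $T_1$ is $3$-dimensional, lying in the subspace of vectors constant along $E_2$; symmetrically the image of $T_2$ is $3$-dimensional and constant along $E_1$. A vector that is constant along both indices and annihilated by both residue relations must vanish, so the two images meet only in $0$ and together span a $6$-dimensional subspace. Hence $\textrm{rank}\,\textrm{ob}_Y^{*}\geq 6$, giving $\textrm{rank}\,\textrm{ob}_Y\geq 6$; combined with the opposite inequality we obtain $\textrm{rank}\,\textrm{ob}_Y=6$ and therefore $\dim\ker\,\textrm{ob}_Y=\dim\textrm{coker}\,\textrm{ob}_Y=10$.

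The main obstacle is the middle step: correctly identifying the node obstruction pairing with the residue map on the elliptic factors and checking that the relevant local residue is genuinely nonzero, so that the rank of $\rho$ is exactly $3$. This is where the specific geometry enters, namely the elliptic quotients $E_i$ and the $(4,\,4)$ branching, the key numerical input being $h^1(\oo_{E_i})=1$, which supplies precisely one relation among the four residues on each factor. Note that $T_3$ and $T_4$ are not needed for the argument, since $T_1\oplus T_2$ already realizes the full rank $6$.
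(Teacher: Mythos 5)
Your overall strategy is essentially the paper's own: pass to the dual map $\mathrm{ob}_Y^*$, use the decomposition $T_1\oplus T_2\oplus T_3\oplus T_4$ of \eqref{eq.decompH2}--\eqref{eq.decompH2-1}, and reduce the rank count to residues on the elliptic curves $E_i$. Your one genuine structural difference is a nice shortcut: since Remark \ref{rem.dim=18}, Corollary \ref{cor.Theta.Y} and the sequence \eqref{seq.sing.Y} already force $\mathrm{rank}\,\mathrm{ob}_Y\le 6$, you only need the lower bound $\mathrm{rank}\,\mathrm{ob}_Y^*\ge 6$, so you may ignore $T_3$ and $T_4$ entirely; the paper instead computes $\ker \mathrm{ob}_Y^*$ exactly (dimension $10$ outright), which requires additionally proving that $T_3\oplus T_4$ is annihilated. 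Your concluding linear algebra (rank $3$ of $\rho$ from the cohomology sequence of $0\to\mathcal{O}_{E_1}\to\mathcal{O}_{E_1}(\Delta_1)\to N_{\Delta_1}\to 0$, and triviality of the intersection of the two images) is correct and is equivalent to the paper's appeal to the Residue Theorem.

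However, there is a genuine gap exactly where you flag ``the main obstacle'': the identification of $\mathrm{ob}_Y^*|_{T_1}$ with (residue along $\Delta_1$) tensored with a \emph{nonzero} functional on $H^0(\omega_{E_2})$ is asserted, not proved, and the appeal to Grothendieck local duality is too coarse to establish it. The danger is concrete: the same heuristic applies verbatim to $T_3$ and $T_4$, and yet those summands pair to zero. The paper settles this with the explicit local computation from \cite{Cat89}: $(\mathcal{T}^1_Y)^*\cong(r_*\Omega^1_Z)^+/\Omega^1_Y$ is, at each node, one-dimensional, generated by $xdy-ydx$ subject to the relations $xdy=-ydx=\frac{1}{2}(xdy-ydx)$ and $xdx=ydy=0$; the local contributions of $T_1,\dots,T_4$ are respectively of the form $\alpha_1\beta_1\,ydx$, $\alpha_2\beta_2\,xdy$, $\alpha_3\beta_3\,xdx$, $\alpha_4\beta_4\,ydy$, each tensored with $dx\wedge dy$, so $T_1$ and $T_2$ survive with coefficient the constant terms $\alpha(0)\beta(0)$ --- which is precisely your ``residue times nonzero factor'' --- while $T_3$ and $T_4$ die. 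Without a computation of this kind your lower bound $\mathrm{rank}\,\mathrm{ob}_Y^*\ge 6$ has no support: a priori the pairing could vanish on $T_1\oplus T_2$ as well, and then the argument yields nothing. A smaller related point: the values of the holomorphic $1$-form of $E_2$ at the four points $d_j''$ are coordinate-dependent and need not be equal, so the image of $T_1$ is a priori of product type $(a_ib_j)_{ij}$ with $b_j\neq 0$ rather than literally constant in $j$; this does not change the rank count (and can be repaired by choosing local coordinates adapted to the flat uniformization of $E_2$), but it is another spot where the missing local analysis is doing real work.
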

\begin{proof}
Notice that Remark \ref{rem.dim=18} only gives $\dim(\ker
\textrm{ob}_Y)\ge 10$. In order to prove equality, we apply an
argument used in \cite[Section 2]{Cat89}.

Let us consider the dual map ${\rm ob}_Y^\ast: H^2(\Theta_Y)^\ast
\to (\mathcal{T}^1_Y)^{\ast}$. We set
\begin{equation*}
\begin{split}
\Delta_1 &=  d_1'+d_2'+d_3'+d_4' \\
\Delta_2  &=d_1''+d_2''+d_3''+d_4''
\end{split}
\end{equation*}
and we choose local coordinates $(x, \, y)$ in $Z$ vanishing at
$(d_i',\, d_j'')$. Then the action of $H$ with respect to these
coordinates is given by $(x, \,y) \to (-x, \, -y)$.

By \cite{Cat89} we have an isomorphism
$(\mathcal{T}^1_Y)^*=(r_*\Omega^1_Z)^+/\Omega^1_Y$, therefore
$\textrm{ob}_Y^*$ can be seen as a map
\begin{equation*}
\textrm{ob}_Y^* \colon H^0(\Omega_Z^1 \otimes \Omega_Z^2)^+ \to
(r_*\Omega^1_Z)^+/\Omega^1_Y.
\end{equation*}

Near any of the ordinary double points of $Y$, the sheaf
$(r_*\Omega^1_Z)^+$ is locally generated by $xdx$, $xdy$, $ydx$,
$ydy$, whereas $\Omega^1_Y$ is locally generated by $d(x^2)$,
$d(xy)$,  $d(y^2)$; then $(r_*\Omega^1_Z)^+/\Omega^1_Y$ is locally
generated by $xdy-ydx$,
 cf. \cite[Lemma 2.11]{Cat89}.

Looking at \eqref{eq.decompH2-1} and making straightforward
computations, one checks that
\begin{itemize}
\item the summand $T_1$ contributes expressions of type $\alpha_1
\beta_1 ydx \otimes (dx \wedge dy)$; \item the summand $T_2$
contributes expressions of type $\alpha_2 \beta_2 xdy \otimes
(dx\wedge dy)$; \item the summand $T_3$ contributes expressions of
type $\alpha_3 \beta_3 xdx \otimes (dx \wedge dy)$; \item the
summand $T_4$ contributes expressions of type $\alpha_4 \beta_4
ydy \otimes (dx \wedge dy)$,
\end{itemize}
where $\alpha_i=\alpha_i(x^2)$ and $\beta_i=\beta_i(y^2)$ are
pullbacks of local functions on $E_i$.

Since in the $\oo_Y$-module $(r_*\Omega^1_Z)^+/\Omega^1_Y$ we have
the relations
$$1/2(xdy-ydx)=xdy=-ydx\,\, \text{and} \,\, xdx=ydy=0,$$ it follows
that the restriction of $\textrm{ob}_Y^*$ to the subspace $T_3
\oplus T_4$ is zero, whereas the restriction of $\textrm{ob}_Y^*$
to the subspace $T_1 \oplus T_2$ can be identified, up to a
 multiplicative constant, with the map
\begin{equation*}
\begin{split}
\phi \colon H^0 & (\omega_{E_1}^2(\Delta_1)) \oplus
H^0(\omega_{E_2}^2(\Delta_2))
\to \bigoplus_{i,j=1}^4 \mathbb{C}_{ij}, \\
\phi & (\sigma \oplus \tau)=
\bigoplus_{i,j=1}^4(\textrm{val}_{d_i'} (\sigma)
-\textrm{val}_{d_j''}(\tau)).
\end{split}
\end{equation*}

Here the valuation maps $\textrm{val}_{d_i'}$ and
$\textrm{val}_{d_j''}$ are defined, as usual, by the short exact
sequences
\begin{equation} \label{eq.valuation}
\begin{split}
 0 & \to
H^0(\omega_{E_1}^2)\to H^0(\omega_{E_1}^2(\Delta_1))\stackrel{\oplus
\textrm{val}_{d_i'}}{\lr}H^0(N_{\Delta_1}) \cong \oplus_{i=1}^4 \mathbb C_i, \\
0 & \to H^0(\omega_{E_2}^2)\to
H^0(\omega_{E_2}^2(\Delta_2))\stackrel{\oplus
\textrm{val}_{d_j''}}{\lr} H^0(N_{\Delta_2}) \cong \oplus_{j=1}^4
\mathbb C_j.
\end{split}
\end{equation}

Therefore we obtain
\begin{equation} \label{ker-p}
\begin{split}
\ker \phi =\{\sigma \oplus \tau \, | \,
&\textrm{val}_{d_1'}(\sigma)=\textrm{val}_{d_2'}(\sigma)=\textrm{val}_{d_3'}(\sigma)=\textrm{val}_{d_4'}(\sigma)
\\
=
&\textrm{val}_{d_1''}(\tau)=\textrm{val}_{d_2''}(\tau)=\textrm{val}_{d_3''}(\tau)=\textrm{val}_{d_4''}(\tau)
\}.
\end{split}
\end{equation}

As $E_i$ is an elliptic curve, we have $\omega^2_{E_i}=\omega_{E_i}$
and so $\eqref{eq.valuation}$ are the standard residue sequences for
meromorphic $1$-forms. By the Residue Theorem we get
\begin{equation*}
\sum_{i=1}^4 \textrm{val}_{d_i'}(\sigma)=\sum_{j=1}^4
\textrm{val}_{d_j''}(\tau)=0,
\end{equation*}
hence \eqref{ker-p} implies that $\sigma \oplus \tau \in \ker
\phi$ if and only if
$\textrm{val}_{d_i'}(\sigma)=\textrm{val}_{d_j''}(\tau)=0$ for all
pairs $(i,\,j)$. This yields $\ker \phi = H^0(\omega^2_{E_1})
\oplus H^0(\omega^2_{E_2}) \cong \mathbb{C} \oplus \mathbb{C}$.

Then $\ker \textrm{ob}^*=\ker \phi \oplus T_3 \oplus T_4 \cong
\mathbb{C}^{10}$, hence $\dim {\rm coker}\, {\rm ob}_Y = 10$ and
we are done.
\end{proof}

\begin{corollary} \label{cor.ext1}

We have
\begin{equation*}
\dim \emph{Ext}^1(\Omega_Y^1, \, \oo_Y)=18.
\end{equation*}
\end{corollary}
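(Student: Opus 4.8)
The plan is to read the dimension directly off the four-term exact sequence \eqref{seq.sing.Y}, since by this point all of the nontrivial input has already been assembled. The key point is that the obstruction map $\textrm{ob}_Y$ and the three terms sitting to its left are now completely under control: $H^1(\Theta_Y)$ has been computed in Corollary \ref{cor.Theta.Y}, the middle term $\mathcal{T}^1_Y$ is the skyscraper $\cong \mathbb{C}^{16}$ on the sixteen nodes, and the rank of $\textrm{ob}_Y$ has been pinned down in Proposition \ref{prop.ker.ob}.

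First I would invoke exactness of \eqref{seq.sing.Y} at the term $\mathcal{T}^1_Y$: this says precisely that the image of the arrow $\textrm{Ext}^1(\Omega^1_Y, \, \oo_Y) \to \mathcal{T}^1_Y$ coincides with $\ker \textrm{ob}_Y$. Combined with exactness at $\textrm{Ext}^1(\Omega^1_Y, \, \oo_Y)$, which identifies the kernel of that arrow with the image of $H^1(\Theta_Y)$, the long sequence \eqref{seq.sing.Y} truncates to the short exact sequence
\begin{equation*}
0 \to H^1(\Theta_Y) \lr \textrm{Ext}^1(\Omega^1_Y, \, \oo_Y) \lr \ker \textrm{ob}_Y \to 0.
\end{equation*}

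Then I would simply take dimensions and use additivity in a short exact sequence. By Corollary \ref{cor.Theta.Y} we have $h^1(\Theta_Y) = 8$, and by Proposition \ref{prop.ker.ob} we have $\dim \ker \textrm{ob}_Y = 10$, whence $\dim \textrm{Ext}^1(\Omega^1_Y, \, \oo_Y) = 8 + 10 = 18$.

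There is no serious obstacle at this stage: the entire difficulty was concentrated in the determination of $\dim \ker \textrm{ob}_Y$ carried out in Proposition \ref{prop.ker.ob}, and the present statement is essentially bookkeeping on the exact sequence. As a consistency check, the value $18$ matches the lower bound $\dim_{[Y]} \textrm{Def}(Y) \ge 18$ of Remark \ref{rem.dim=18}; since $\textrm{Ext}^1(\Omega^1_Y, \, \oo_Y) = T_{[Y]}\textrm{Def}(Y)$, this forces the tangent space and the local dimension of $\textrm{Def}(Y)$ to agree, so that $\textrm{Def}(Y)$ is in fact smooth of dimension $18$ at $[Y]$.
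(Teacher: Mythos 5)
Your proof is correct and is exactly the paper's argument: the paper also deduces the corollary as ``immediate'' from Corollary \ref{cor.Theta.Y} ($h^1(\Theta_Y)=8$), Proposition \ref{prop.ker.ob} ($\dim\ker\mathrm{ob}_Y=10$), and the exact sequence \eqref{seq.sing.Y}, which is precisely the dimension count you spell out via the truncated short exact sequence. Your closing consistency check with Remark \ref{rem.dim=18} is a nice observation and is in fact how the paper proves Proposition \ref{prop.def(Y)}(i) immediately afterwards.
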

\begin{proof}
Immediate from Corollary \ref{cor.Theta.Y}, Proposition
\ref{prop.ker.ob} and exact sequence
 \eqref{seq.sing.Y}.
\end{proof}

\begin{proposition} \label{prop.def(Y)}
The following holds:
\begin{itemize}
\item[$\boldsymbol{(i)}$] $\emph{Def}(Y)$ is smooth at $[Y]$, of dimension $18;$
\item[$\boldsymbol{(ii)}$] $\emph{ESDef}(Y)$ is smooth at $[Y]$, of
dimension $8$.
\end{itemize}
\end{proposition}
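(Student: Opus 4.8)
The plan is to derive both statements from a single soft principle: for a deformation functor the local dimension at a point is bounded above by the dimension of its Zariski tangent space, and equality forces the point to be smooth. Since the two relevant tangent spaces have already been computed in Corollaries \ref{cor.ext1} and \ref{cor.Theta.Y}, all that remains is to match these upper bounds with explicit geometric families furnishing the corresponding lower bounds.

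For part $\boldsymbol{(i)}$, recall that $T_{[Y]}\textrm{Def}(Y) = \textrm{Ext}^1(\Omega_Y^1, \, \oo_Y)$ has dimension $18$ by Corollary \ref{cor.ext1}, while Remark \ref{rem.dim=18} already exhibits an $18$-dimensional family of deformations of $Y$, obtained by moving the polarized abelian surface $E_1 \times E_2$ in its $3$-dimensional moduli and the branch divisor $\Delta$ of $v$ inside the $15$-dimensional linear system $|\Delta|$. Thus $\dim_{[Y]}\textrm{Def}(Y) \geq 18$, and the chain $18 \leq \dim_{[Y]}\textrm{Def}(Y) \leq \dim T_{[Y]}\textrm{Def}(Y) = 18$ simultaneously pins down the dimension and yields smoothness at $[Y]$. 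So part $\boldsymbol{(i)}$ is essentially immediate once Remark \ref{rem.dim=18} and Corollary \ref{cor.ext1} are in hand.

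For part $\boldsymbol{(ii)}$, the tangent space $T_{[Y]}\textrm{ESDef}(Y) = H^1(\Theta_Y)$ has dimension $8$ by Corollary \ref{cor.Theta.Y}, so it suffices to produce an $8$-dimensional family of deformations preserving all sixteen nodes. I would use the \emph{product} deformations, obtained by independently deforming the two bielliptic data $h_i \colon C_i \to E_i$, i.e. deforming each elliptic curve $E_i$ together with its four-point branch divisor $\Delta_i$. Retaining the product structure $Z = C_1 \times C_2$ with the diagonal $H$-action, the fixed locus of $H$ on $Z$ stays equal to the sixteen points formed by pairing the four ramification points of $h_1$ with those of $h_2$; hence $Y = Z/H$ keeps exactly sixteen ordinary double points throughout, so the deformation is equisingular. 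A moduli count gives $4$ parameters for each pair $(E_i, \Delta_i)$ — one for $E_i$, four for the four branch points, minus one for translations of $E_i$ — for a total of $4 + 4 = 8$, the square-root data defining the double cover being discrete. This gives $\dim_{[Y]}\textrm{ESDef}(Y) \geq 8$, and the sandwich $8 \leq \dim_{[Y]}\textrm{ESDef}(Y) \leq \dim T_{[Y]}\textrm{ESDef}(Y) = 8$ again yields smoothness and the exact dimension.

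The one delicate point is the lower bound in $\boldsymbol{(ii)}$: one must verify that the product deformations genuinely span $8$ independent directions inside $H^1(\Theta_Y)$, rather than collapsing to a smaller subspace, and that no further equisingular directions have been overlooked. Here the already-established equality $h^1(\Theta_Y)=8$ is decisive, since it leaves no room beyond the product deformations and at the same time certifies that the $8$-parameter family is effectively parametrized; the only remaining care is the bookkeeping of the translation quotient on each factor and the discreteness of the covering data, both of which are routine.
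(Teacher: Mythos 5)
Your proof is correct and follows essentially the same route as the paper: both parts are established by sandwiching the local dimension between the tangent-space dimensions from Corollaries \ref{cor.ext1} and \ref{cor.Theta.Y} and the lower bounds coming from explicit families, namely the $18$-dimensional family of Remark \ref{rem.dim=18} for $\textrm{Def}(Y)$ and the $8$-dimensional family of product-type data $(E_i, \, \Delta_i)$ for $\textrm{ESDef}(Y)$. Your bookkeeping ($1+4-1=4$ parameters per factor) is just a slightly more explicit version of the paper's count, which varies the two elliptic curves and keeps the polarization $\Delta$ of product type.
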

\begin{proof}
By Remark \ref{rem.dim=18} and Corollary \ref{cor.ext1} we have
\begin{equation*}
18 = \dim \textrm{Ext}^1(\Omega_Y^1, \, \oo_Y)=\dim T_{[Y]}
\textrm{Def}(Y) \geq \dim_{[Y]} \textrm{Def}(Y) \geq 18,
\end{equation*}
which proves $\boldsymbol{(i)}$.

On the other hand, if we move the branch loci $B_i \subset E_i$
the curve $\Delta \subset E_1 \times E_2$ remains of product type,
so in this way we obtain a $8$-dimensional family of
\emph{equisingular} deformations of $Y$; therefore the
equisingular deformation space $\textrm{ESDef}(Y)$ has dimension
at least $8$, and  by Corollary \ref{cor.Theta.Y} we have
\begin{equation*}
8=\dim H^1(\Theta_Y)=\dim T_{[Y]} \textrm{ESDef}(Y) \geq \dim_{[Y]}
\textrm{ESDef}(Y) \geq 8.
\end{equation*}
This proves $\boldsymbol{(ii)}$.
\end{proof}

Summing up, Proposition \ref{prop.def(Y)} shows that the
deformations of $Y$ are unobstructed and that they are all obtained
by deforming the pair $(A,
 \,\Delta)$, where $A$ is an abelian surface and $\Delta$ a
polarization of type $(4,\,4)$. In particular, all the
deformations preserve the action of $H$. Moreover, the
equisingular deformations of $Y$ are also unobstructed and are
obtained by taking as $A$ the product of two elliptic curves and
by choosing the polarization $\Delta$ of product type.

\begin{remark} \label{rem.no.ind.beh.V}
Since $Y$ has only rational double points, by \emph{\cite{BW74}}
the dimension of $\emph{Def}(Y)$ equals the dimension of
$\emph{Def}(V)$. Then
\begin{equation*}
24= h^1(\Theta_V)= \dim T_{[V]} \emph{Def}(V) > \dim_{[V]}
\emph{Def}(V)=18,
\end{equation*}
that is $\emph{Def}(V)$ is \emph{singular} at $[V]$. By
\emph{\cite[Theorem 3.7]{BW74}}, this means that the sixteen
$(-2)$-curves of $V$ do not have independent behavior in
deformations.
\end{remark}

\section{Deformations of the singular product-quotient surface $X=Z/G$} \label{sec.example.X}

Let us consider now the surface $X=Z/G$ defined in Section
\ref{sec.basic.construction} and its minimal resolution of
singularities $\lambda \colon S\to X$. We must analyze several
cases, according to the type of quotient singularities that $X$
contains.

Throughout this section we set $Q:=\mathbb{P}^1 \times
\mathbb{P}^1$ and we denote by $\mathcal{O}_Q(a, \, b)$ the line
bundle of bidegree $(a, \, b)$ on $Q$.

The following exact sequence is the analogue of \eqref{seq.sing.Y}:
\begin{equation} \label{seq.sing.X}
0 \to H^1(\Theta_X) \lr \textrm{Ext}^1(\Omega^1_X, \, \oo_X) \lr
\mathcal{T}^1_X \stackrel{\textrm{ob}_X}{\lr} H^2(\Theta_X).
\end{equation}

\subsection{Example where $\textrm{Sing}(X)= 16\times\frac{1}{4}(1, \, 3)$}

Assume that, locally around each of the fixed points, the action
of $G= \langle \zeta \, | \, \zeta^4=1 \rangle$  is given by
$\zeta \cdot (x, \, y)=(\zeta x, \, \zeta^{-1} y)$. Therefore,
\begin{equation*}
\textrm{Sing}(X) = 16 \times \frac{1}{4}(1,3).
\end{equation*}

In this case $X$ contains only rational double points and we
obtain
\begin{equation*}
p_g(S)=5, \quad q(S)=0, \quad K_S^2=8.
\end{equation*}

\begin{proposition} \label{prop X minimal}
$S$ is a minimal surface of general type.
\end{proposition}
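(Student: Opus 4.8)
The plan is to show that $S$ is of general type and that it is minimal, i.e.\ that $K_S$ is nef, or equivalently that $S$ contains no $(-1)$-curves.

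\medskip

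\emph{General type.} First I would observe that general type is immediate from the invariants already computed. We have $p_g(S)=5>0$ and $K_S^2=8>0$, and since $X$ has only rational double points the minimal resolution $\lambda \colon S \to X$ is crepant, so $K_S=\lambda^* K_X$ and $K_S^2=K_X^2=8$. The Kodaira dimension of $S$ coincides with that of the canonical model, so it suffices to exhibit positivity of the canonical system; the presence of many independent canonical sections ($p_g=5$) together with $K^2>0$ already places $S$ in the general-type range once minimality is in hand.

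\medskip

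\emph{Minimality.} The heart of the argument is minimality. Since the singularities of $X$ are rational double points (sixteen $A_3$-cycles, as $\frac{1}{4}(1,3)$ is the $A_3$ singularity), $S$ is the minimal resolution and the exceptional locus consists of $(-2)$-curves. I would argue that $K_S$ is nef by computing its intersection with every irreducible curve $C$ on $S$. Because $K_S=\lambda^* K_X$ and $K_X$ is the pullback of an ample (indeed very ample, by the birationality of $\phi_K$ mentioned in the introduction) bundle under the canonical map, $K_S \cdot C \ge 0$ for all $C$, with equality exactly on the exceptional $(-2)$-curves. Concretely, one can pull back: the canonical class of $X$ equals $u^*\mathcal{O}_Q(1,1)$ twisted by the appropriate eigensheaf data coming from \eqref{eq.decomp.f}, and $\mathcal{O}_Q(1,1)$ is ample on $Q=\mathbb{P}^1\times\mathbb{P}^1$, so its pullback is nef on $S$ and strictly positive off the exceptional locus. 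Hence $K_S^2=8>0$ forces $K_S$ to be nef and big, so $S$ is a minimal surface of general type.

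\medskip

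\emph{Main obstacle.} The delicate point is verifying that $K_S$ is genuinely nef rather than merely big, i.e.\ ruling out a $(-1)$-curve $C$ with $K_S\cdot C<0$. The clean route is to use that $S$ is a \emph{crepant} resolution of the canonical model $X$, so $K_S$ is the pullback of the ample canonical polarization on $X$; this is exactly the statement that $X$ is the canonical model of $S$, which follows because $X$ has only rational double points and $K_X$ is ample (equivalently, the product-quotient structure gives $K_X$ as the descent of the ample bundle $\mathcal{O}_Q(1,1)$ via $u$). The alternative, purely numerical route, is to compute $h^0(\omega_S^2)$ via the eigensheaf decomposition \eqref{eq.decomp.f}, exactly as was done for $V$ in the proof of Proposition \ref{prop.inv.T}: invoking the equality $h^0(\omega_S^2)=K_S^2+\chi(\mathcal{O}_S)$ certifies that $S$ already equals its own minimal model. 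I expect the second approach to be the one the author uses, since the bicanonical computation parallels the one already carried out for $Y$, and it sidesteps any separate case analysis of potential exceptional curves.
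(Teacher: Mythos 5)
Your proposal is correct, but the argument you actually carry out is genuinely different from the paper's. The paper proves minimality by a bicanonical count --- exactly the ``numerical route'' you sketch at the end but never execute: by the K\"unneth formula and the third line of \eqref{eq.decomp.f} (with characters paired $\chi\otimes\chi$ because the action is twisted on the second factor), one gets $h^0(\omega_S^2)=h^0(\omega_X^2)=h^0(\omega_Z^2)^G=1\cdot 1+2\cdot 2+3\cdot 3+0\cdot 0=14=K_S^2+\chi(\oo_S)$; since $h^0(2K_S)$ is a birational invariant equal to $K^2+\chi$ of the minimal model, this equality certifies minimality. Your main route instead establishes nefness of $K_S$ directly: $\lambda$ is crepant because $X$ has only rational double points, and the Hurwitz formula for the simple $G$-cover $u$, totally branched over the bidegree $(4,4)$ divisor $B$, gives $K_X=u^*\bigl(K_Q+\tfrac{3}{4}B\bigr)=u^*\oo_Q(1,1)$, which is ample since $u$ is finite; hence $K_S=\lambda^*K_X$ is nef and big. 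This is sound, but it front-loads the content that the paper defers to Proposition \ref{prop.defX}; in exchange it proves more (that $X$ is the canonical model of $S$, with $K_X$ ample), whereas the paper's check is shorter and recycles the computation already done for $V$ in Proposition \ref{prop.inv.T}. Two imprecisions in your write-up should be fixed: the phrase ``$u^*\mathcal{O}_Q(1,1)$ twisted by the appropriate eigensheaf data'' is not right --- Hurwitz gives $K_X=u^*\oo_Q(1,1)$ exactly, with no twist, and the justification is the ramification of $u$, not the decomposition \eqref{eq.decomp.f}; and very ampleness of $K_X$ does not follow from birationality of $\phi_K$ (nor is it needed, since nefness of the pullback suffices). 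Finally, had you committed to the numerical route, you would still owe the computation $h^0(\omega_S^2)=14$, which your proposal nowhere performs.
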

\begin{proof}
$S$ is of general type because $p_g(S)>0$ and $K_S^2 >0$. Since the
action of $G$ is twisted on the second factor and $X$ has only
rational double points, the K$\ddot{\textrm{u}}$nneth formula and
the third equality in \eqref{eq.decomp.f} give
\begin{equation*}
\begin{split}
H^0(\omega_S^2)&=H^0(\omega_X^2)=H^0(\omega_Z^2)^G=H^0(\omega_{C_1}^2
\boxtimes \omega_{C_2}^2)^G
\\
&=\bigoplus_{\chi \in \widehat{G}}(H^0({g_1}_*
\omega_{C_1}^2)^{\chi} \otimes H^0({g_2}_* \omega_{C_2}^2)^{\chi}
)=\mathbb{C}^{14}.
\end{split}
\end{equation*}
This shows that $h^0(\omega_S^2)=K_S^2 + \chi(\oo_S)$, hence $S$
is a minimal surface.
\end{proof}

\begin{proposition} \label{prop.example.1}
The following holds:
\begin{itemize}
\item[$\boldsymbol{(i)}$] $\emph{ob}_X$ is surjective;
\item[$\boldsymbol{(ii)}$] $h^1(\Theta_X)=2, \quad
h^2(\Theta_X)=6, \quad h^1(\Theta_S)=50, \quad h^2(\Theta_S)=6$.
\item[$\boldsymbol{(iii)}$] $\emph{ESDef}(X)$ is smooth at $[X]$,
of dimension $2$.
\end{itemize}
\end{proposition}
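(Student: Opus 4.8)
The plan is to mimic the analysis of $Y$ carried out in Section \ref{sec.example.Y}, using that $X$ has only rational double points; indeed $\frac{1}{4}(1,3)$ is the $A_3$ singularity $uv=w^4$. Since $X$ is normal, \cite[Proposition 1.2]{BW74} gives $\lambda_*\Theta_S=\Theta_X$, so the argument of \cite[Section 1]{BW74} (see also \cite[p. 299]{Cat89}) produces isomorphisms
\begin{equation*}
H^1(\Theta_S)\cong H^1(\Theta_X)\oplus H^1_E(\Theta_S),\qquad H^2(\Theta_S)\cong H^2(\Theta_X),
\end{equation*}
where $E\subset S$ is the exceptional locus of $\lambda$. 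Each $A_3$-cycle is a chain of three $(-2)$-curves, so $E$ is a disjoint union of $16\times 3=48$ such curves and \cite[Section 1]{BW74} gives $H^1_E(\Theta_S)\cong\mathbb{C}^{48}$.

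I would first establish $\boldsymbol{(ii)}$. As in \eqref{eq.decompH2}, Serre duality on $Z$ identifies $H^2(\Theta_X)^*\cong H^0(\Omega^1_Z\otimes\Omega^2_Z)^G$; writing $\Omega^1_Z\otimes\Omega^2_Z=(\omega_{C_1}^2\boxtimes\omega_{C_2})\oplus(\omega_{C_1}\boxtimes\omega_{C_2}^2)$ and taking $G$-invariants, the K\"unneth formula applies. Because the action is twisted, $\zeta\cdot(x,y)=(\zeta x,\zeta^{-1}y)$, a decomposable tensor is $G$-invariant exactly when the two eigencharacters coincide; inspecting \eqref{eq.decomp.f} one sees that both factors are simultaneously nonzero only in the $\chi_2$-eigensheaves, whence
\begin{equation*}
\dim H^0(\omega_{C_1}^2\boxtimes\omega_{C_2})^G=\dim H^0(\omega_{C_1}\boxtimes\omega_{C_2}^2)^G=3,
\end{equation*}
so $h^2(\Theta_X)=h^2(\Theta_S)=6$. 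Riemann--Roch on $S$, together with $\chi(\oo_S)=6$, $K_S^2=8$ and $h^0(\Theta_S)=0$, gives $h^1(\Theta_S)-h^2(\Theta_S)=10\chi(\oo_S)-2K_S^2=60-16=44$, hence $h^1(\Theta_S)=50$; combining with the Burns--Wahl splitting and $H^1_E(\Theta_S)\cong\mathbb{C}^{48}$ yields $h^1(\Theta_X)=50-48=2$.

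For $\boldsymbol{(i)}$ I would dualize \eqref{seq.sing.X}: surjectivity of $\textrm{ob}_X$ is equivalent to injectivity of $\textrm{ob}_X^*\colon H^2(\Theta_X)^*\to(\mathcal{T}^1_X)^*$. Following \cite[Section 2]{Cat89} I would identify $(\mathcal{T}^1_X)^*$ with the skyscraper $(p_*\Omega^1_Z)^G/\Omega^1_X$ supported at the sixteen $A_3$-points and describe $\textrm{ob}_X^*$ as the natural localization map on the two summands of $H^2(\Theta_X)^*$, reducing everything to a valuation computation at the branch loci $B_1,B_2\subset\mathbb{P}^1$ along the lines of \eqref{eq.valuation}--\eqref{ker-p}. \emph{This is the main obstacle:} in contrast with the nodal case of Proposition \ref{prop.ker.ob}, the local quotient $(p_*\Omega^1_Z)^G/\Omega^1_X$ at an $A_3$-point is three-dimensional, so the local bookkeeping (in the generators $x^3dx,\,y^3dy,\,x\,dy+y\,dx$ of $\Omega^1_X$) is considerably more delicate. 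The decisive simplification I expect is that the base is now $\mathbb{P}^1$ rather than an elliptic curve: since $H^0(\omega_{\mathbb{P}^1}^2)=0$, the residue constraint that produced a two-dimensional kernel in \eqref{ker-p} leaves no kernel at all, forcing $\textrm{ob}_X^*$ to be injective. As a consistency check, once $\dim_{[X]}\textrm{Def}(X)=44$ is known (Proposition \ref{dim44}, via the identification of the general deformation of $X$ with a smooth complete intersection of bidegree $(2,4)$ in $\mathbb{P}^4$, \cite[Proposition 6.2]{Cat97}), surjectivity is immediate: \eqref{seq.sing.X} gives $\dim\ker\textrm{ob}_X=\dim\textrm{Ext}^1(\Omega^1_X,\oo_X)-h^1(\Theta_X)=44-2=42$, so $\dim\textrm{im}\,\textrm{ob}_X=48-42=6=h^2(\Theta_X)$.

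Finally $\boldsymbol{(iii)}$ follows formally. By $\boldsymbol{(ii)}$ the tangent space $T_{[X]}\textrm{ESDef}(X)=H^1(\Theta_X)$ has dimension $2$. For the matching lower bound I would exhibit an explicit $2$-dimensional equisingular family: moving the four branch points of each simple $\mathbb{Z}_4$-cover $g_i\colon C_i\to\mathbb{P}^1$ keeps $X$ a product-quotient surface with $\textrm{Sing}(X)=16\times\frac{1}{4}(1,3)$, and as four points on $\mathbb{P}^1$ have one-dimensional moduli the two factors contribute $1+1=2$ parameters. Hence
\begin{equation*}
2=\dim H^1(\Theta_X)=\dim T_{[X]}\textrm{ESDef}(X)\geq\dim_{[X]}\textrm{ESDef}(X)\geq 2,
\end{equation*}
so $\textrm{ESDef}(X)$ is smooth at $[X]$ of dimension $2$.
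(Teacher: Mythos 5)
Your parts \textbf{(ii)} and \textbf{(iii)} are correct and coincide with the paper's argument: the K\"unneth/eigensheaf computation giving $h^2(\Theta_X)=6$ (only the $\chi_2$-eigensheaves pair nontrivially), Riemann--Roch plus the Burns--Wahl splitting giving $h^1(\Theta_S)=50$ and $h^1(\Theta_X)=2$, and the two-parameter family obtained by moving $B_1,B_2$ giving smoothness of $\textrm{ESDef}(X)$. The problem is part \textbf{(i)}, which is the heart of the proposition and which you have not actually proved. You set up the correct framework (dualize, identify $(\mathcal{T}^1_X)^*$ with $(p_*\Omega^1_Z)^G/\Omega^1_X$, reduce to valuations at the branch points) and you correctly anticipate the mechanism, but you explicitly stop at what you call ``the main obstacle'': showing how $U_1\oplus U_2$ actually maps into the three-dimensional local skyscraper. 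This is precisely what the paper computes: locally $U_1$ contributes expressions $\alpha_1\beta_1\, x\,dy\otimes(dx\wedge dy)$ and $U_2$ contributes $\alpha_2\beta_2\, y\,dx\otimes(dx\wedge dy)$; since $x\,dy+y\,dx=d(xy)\in\Omega^1_X$, both land in the single component of $(p_*\Omega^1_Z)^G/\Omega^1_X$ spanned by the $i=0$ generator $y\,dx-x\,dy$, so $\textrm{ob}_X^*$ is identified (up to constant) with the valuation-difference map $\phi$ of \eqref{ker-p-2}, whose image lies in $\bigoplus_{i,j}\mathbb{C}_{ij}\subset\bigoplus_{i,j}\mathbb{C}_{ij}^{\oplus 3}$; then the Residue Theorem forces every element of $\ker\phi$ to have all valuations equal to zero, and $\ker\phi=0$ because $\mathbb{P}^1$ carries no nonzero holomorphic $1$-forms. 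Without this local bookkeeping, your ``forcing $\textrm{ob}_X^*$ to be injective'' is an expectation, not a proof --- note in particular that it is not a priori clear that the three-dimensional local target causes no new kernel, precisely because the image turns out to be confined to a one-dimensional piece at each point.

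Moreover, your fallback ``consistency check'' is circular and cannot be promoted to a proof. In the paper, Proposition \ref{dim44} is deduced \emph{from} Proposition \ref{prop.example.1}: surjectivity of $\textrm{ob}_X$ is what yields $\dim\textrm{Ext}^1(\Omega^1_X,\oo_X)=2+(48-6)=44$. Without part \textbf{(i)}, Proposition \ref{prop.defX} gives only the dimension of the deformation space, $\dim_{[X]}\textrm{Def}(X)=44$, hence only the \emph{lower} bound $\dim\textrm{Ext}^1(\Omega^1_X,\oo_X)=\dim T_{[X]}\textrm{Def}(X)\geq 44$; via \eqref{seq.sing.X} this gives $\dim\textrm{im}\,\textrm{ob}_X\leq 6$, which is the inequality in the wrong direction. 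To get the upper bound $\dim\textrm{Ext}^1\leq 44$ you would need smoothness of $\textrm{Def}(X)$ at $[X]$, i.e.\ Proposition \ref{dim44} itself, whose proof rests on \textbf{(i)}. So the local computation is not optional; it is the proof.
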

\begin{proof}
$\boldsymbol{(i)}$  Let us consider the dual map ${\rm ob}_X^\ast:
H^2(\Theta_X)^\ast \to (\mathcal{T}^1_X)^\ast$. By Grothendieck
duality (see \cite[Chapter I]{AK70}) and K$\ddot{\textrm{u}}$nneth
formula we obtain
\begin{equation} \label{U}
\begin{split}
H^2(\Theta_X)^\ast & = H^0(\Omega_Z^1 \otimes \Omega_Z^2)^G \\
& =  \bigoplus_{\chi \in \widehat G} \big[
(H^0({g_1}_*\omega_{C_1})^\chi \otimes H^0({g_2}_*
\omega_{C_2}^2)^\chi)  \\
& \quad \quad \oplus  (H^0({g_1}_* \omega_{C_1}^2)^\chi
\otimes H^0({g_2}_* \omega_{C_2})^\chi) \big]  \\
& = U_1 \oplus U_2, \,\, \text{where} \\
& U_1=H^0(\omega_{\mathbb{P}^1} \otimes \mathcal{M}_1^2 )\otimes
H^0(\omega_{\mathbb{P}^1}^2(B_2) \otimes \mathcal{M}_2^2), \\
& U_2=H^0(\omega_{\mathbb{P}^1}^2(B_1) \otimes \mathcal{M}_1^2
)\otimes H^0(\omega_{\mathbb{P}^1} \otimes \mathcal{M}_2^2).
\end{split}
\end{equation}
This yields $h^2(\Theta_X)=6$ and so $h^2(\Theta_S)=6$. Now we set
\begin{equation*}
\begin{split}
B_1 & = b_1'+b_2'+b_3'+b_4'\\
B_2 & = b_1''+b_2''+b_3''+b_4''
\end{split}
\end{equation*}
and we choose local coordinates $(x, \, y)$ in $Z$ vanishing at
$(b_i', \,b_j'')$. As in Section \ref{sec.example.Y}, we can
interpret  $\textrm{ob}_X^*$ as a map
\begin{equation*}
\textrm{ob}_X^* \colon H^0(\Omega_Z^1 \otimes \Omega_Z^2)^G \to
(p_*\Omega^1_Z)^G/\Omega^1_X,
\end{equation*}
where $(p_*\Omega^1_Z)^G/\Omega^1_X$ is a skyscraper sheaf
supported on the singular points of $X$ and locally generated by
$x^iy^{i+1}dx-y^ix^{i+1}dy$, for $i=0,1,2$, see \cite{Cat89}.

A straightforward local computation shows that the summand $U_1$
in \eqref{U} contributes expressions of the form $\alpha_1 \beta_1
xdy \otimes (dx \wedge dy)$ whereas the summand $U_2$ contributes
expressions of the form $\alpha_2 \beta_2 ydx \otimes (dx \wedge
dy)$, where $\alpha_i=\alpha_i(x^2)$ and $\beta_i=\beta_i(y^2)$
are pullbacks of local functions on $\mathbb{P}^1$. Therefore the
map $\textrm{ob}_X^*$ can be identified, up to a multiplicative
constant, with
\begin{equation*}
\begin{split}
\phi \colon & H^0(\omega_{\mathbb{P}^1}^2(B_1)  \otimes
\mathcal{M}_1^2) \oplus H^0(\omega_{\mathbb{P}^1}^2(B_2) \otimes
\mathcal{M}_2^2) \\
& \to \bigoplus_{i,j=1}^4 \mathbb{C}_{ij}  \subset
\bigoplus_{i,j=1}^{4} \mathbb{C}_{ij}^{\oplus 3} \cong
 (\mathcal{T}_X^1)^{\ast} \\
  & \phi (\sigma \oplus \tau)=
\bigoplus_{i,j=1}^4(\textrm{val}_{b_i'} (\sigma)
-\textrm{val}_{b_j''}(\tau)),
\end{split}
\end{equation*}
where the valuation maps are defined as in Section
\ref{sec.example.Y}. Hence we obtain
\begin{equation} \label{ker-p-2}
\begin{split}
\ker \phi =\{\sigma \oplus \tau \, | \,
&\textrm{val}_{b_1'}(\sigma)=\textrm{val}_{b_2'}(\sigma)=\textrm{val}_{b_3'}(\sigma)=\textrm{val}_{b_4'}(\sigma)
\\
=
&\textrm{val}_{b_1''}(\tau)=\textrm{val}_{b_2''}(\tau)=\textrm{val}_{b_3''}(\tau)=\textrm{val}_{b_4''}(\tau)
\}.
\end{split}
\end{equation}
On the other hand, the valuation map
$H^0(\omega_{\mathbb{P}^1}^2(B_i) \otimes \mathcal{M}_i^2) \to
H^0(N_{B_i})$ can be identified with the residue map
$H^0(\omega_{\mathbb{P}^1}(B_i)) \to H^0(N_{B_i})$ via the
isomorphism $H^0(\omega_{\mathbb{P}^1}^2(B_i) \otimes
\mathcal{M}_i^2) \cong H^0(\omega_{\mathbb{P}^1}(B_i))$. By the
Residue Theorem  we have
\begin{equation*}
 \sum_{i=1}^4 \textrm{val}_{b_i'}(\sigma)=\sum_{j=1}^4
\textrm{val}_{b_j''}(\tau)=0,
\end{equation*}
so \eqref{ker-p-2} implies that $\sigma \oplus \tau \in \ker \phi$
if and only if
$\textrm{val}_{b_i'}(\sigma)=\textrm{val}_{b_j''}(\tau)=0$ for all
pairs $(i, \,j)$. But there are no non-zero holomorphic $1$-forms on
$\mathbb{P}^1$, so $\ker \phi=0$ and $\textrm{ob}_X^*$ is injective.
Therefore the obstruction map ${\rm ob}_X$ is surjective.

$\boldsymbol{(ii)}$ Let us denote by $F \subset S$ the exceptional
divisor of $\lambda \colon S \to X$. Since $S$ has only rational
double points, we have
\begin{equation*}
H^1(\Theta_S) \cong H^1(\Theta_X) \oplus H^1_F(\Theta_S), \quad
H^2(\Theta_S) \cong H^2(\Theta_X).
\end{equation*}
By Riemann-Roch theorem we obtain
\begin{equation*}
h^1(\Theta_S)-h^2(\Theta_S) = 10 \chi(\oo_S) - 2 K_S^2=44,
\end{equation*}
then $h^1(\Theta_S)=50$ since we have shown that
$h^2(\Theta_S)=6$, see part $\boldsymbol{(i)}$. Being $F$ the
union of sixteen disjoint $A_3$-cycles, we have $H^1_F(\Theta_S)
\cong \mathbb{C}^{16 \cdot 3}=\mathbb{C}^{48}$. Therefore
$h^1(\Theta_X)=2$.

$\boldsymbol{(iii)}$ The cover $u \colon X \to Q$ is a simple
$G$-cover branched on the divisor $B=B_1 \times B_2$, which has
bidegree $(4, \, 4)$. By varying the branch loci $B_i \subset
\mathbb{P}^1$ we obtain a $2$-dimensional family of equisingular
deformations of $X$. Then
\begin{equation*}
2=\dim H^1(\Theta_X)=\dim T_{[X]} \textrm{ESDef}(X) \geq \dim_{[X]}
\textrm{ESDef} (X) \geq 2,
\end{equation*}
which implies the claim.
\end{proof}

\begin{proposition} \label{prop.defX}
The general deformation of the surface $X$ is a canonically
embedded, smooth complete intersection $S_{2,4}$ of type $(2,\,4)$
in $\mathbb{P}^4$.
\end{proposition}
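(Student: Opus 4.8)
The plan is to combine the explicit numerical invariants of $S$ with Catanese's deformation-theoretic criterion for canonically embedded complete intersections. We already know from the earlier computations that $p_g(S)=5$, $q(S)=0$ and $K_S^2=8$, so $S$ is a minimal surface of general type whose canonical system lives in $\PP^4=\PP(H^0(\omega_S)^*)$. First I would record that a smooth surface $S_{2,4}$ of bidegree $(2,\,4)$ in $\PP^4$ is itself a regular canonical surface with exactly these invariants: by adjunction $K_{S_{2,4}}=\oo_{\PP^4}(2+4-5)|_{S_{2,4}}=\oo_{\PP^4}(1)|_{S_{2,4}}$, so the hyperplane class is canonical, and a Koszul/vanishing computation gives $p_g=5$, $q=0$, $K^2=2\cdot 4\cdot 1=8$. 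This identifies the expected general fibre and its numerical type.

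Next I would invoke \cite[Proposition 6.2]{Cat97}, exactly as announced in the introduction: that result states that if a surface admits a canonical map which is a birational morphism onto a complete intersection (equivalently, if $X$ deforms to a canonically embedded complete intersection of the appropriate multidegree), then the general deformation is such a smooth complete intersection. So the crux is to verify the hypotheses of that proposition for our $X$. I would check that the canonical map $\phi_K\colon X\to\PP^4$ is a birational morphism onto its image $\Sigma\subset\PP^4$, and that $\Sigma$ is (the specialization of) a complete intersection of type $(2,\,4)$. Concretely, one uses the $G$-eigensheaf decomposition \eqref{eq.decomp.f} to write $H^0(\omega_X)=H^0(\omega_S)$ as a sum of products of sections pulled back from the $\PP^1$ factors, read off the image of $\phi_K$, and exhibit the defining quadric and quartic; since $X$ has only rational double points, $\phi_K$ factors through $S$ and the canonical ring of $S$ agrees with that of $X$.

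The main obstacle will be the birationality and complete-intersection verification, i.e.\ showing that the canonical image really is a complete intersection $S_{2,4}$ rather than lying on some unexpected extra hypersurface, and that $\phi_K$ contracts only the exceptional $A_3$-cycles and is otherwise injective. The cleanest route is to use that $h^0(\omega_S)=5$ together with $h^0(\omega_S^2)=14$, computed earlier: for a complete intersection of type $(2,\,4)$ the map $\mathrm{Sym}^2 H^0(\omega)\to H^0(\omega^2)$ has a one-dimensional kernel (the defining quadric), and $\dim\mathrm{Sym}^2\CC^5=15$ matches $14+1$, which both confirms the presence of a quadric relation and pins down the multidegree. Having identified the image as a complete intersection and $\phi_K$ as birational onto it, the desired conclusion follows directly from \cite[Proposition 6.2]{Cat97}, since by Proposition \ref{prop.example.1} the deformation space $\mathrm{Def}(X)$ is smooth (being governed by $\mathrm{Ext}^1(\Omega_X^1,\oo_X)$ with $\mathrm{ob}_X$ surjective), so a general deformation smooths all sixteen $A_3$-cycles and yields a genuine smooth $S_{2,4}$.
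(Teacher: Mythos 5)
Your reduction to \cite[Proposition 6.2]{Cat97} is exactly the paper's first step, and your preliminary numerology ($p_g=5$, $q=0$, $K^2=8$; $\dim\mathrm{Sym}^2H^0(\omega_S)=15$ versus $h^0(\omega_S^2)=14$) is correct. But the entire content of the paper's proof is the verification that $\phi_K\colon X\to\mathbb{P}^4$ is a birational morphism, and this is precisely the step you flag as ``the main obstacle'' and then never carry out. Your substitute --- the one-dimensional kernel of $\mathrm{Sym}^2 H^0(\omega_S)\to H^0(\omega_S^2)$ --- only shows that the canonical image lies on at least one quadric; it cannot distinguish a birational canonical map from, say, a generically finite map of degree $\geq 2$ onto some lower-degree surface, so it neither confirms birationality nor pins down the multidegree. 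The paper's actual argument is short and concrete: since $u\colon X\to Q$ is a \emph{simple} $G$-cover and $X$ has only rational double points, Hurwitz gives $K_X=u^*\mathcal{O}_Q(1,1)$, so $|K_X|$ is base-point free and $\phi_K$ is a morphism; then, writing $H^0(\omega_X)=H^0(\omega_Q\otimes L^2)\oplus H^0(\omega_Q\otimes L^3)=H^0(\mathcal{O}_Q)\oplus H^0(\mathcal{O}_Q(1,1))$ via the eigensheaf decomposition, the four sections $\sigma_i\in H^0(\mathcal{O}_Q(1,1))$ embed $Q$ in $\mathbb{P}^3$ and hence separate points lying in distinct fibres of $u$, while for $y=\zeta^a\cdot x$ in the same general fibre one has $\sigma_i(y)=\zeta^a\sigma_i(x)$ and $\tau(y)=\zeta^{2a}\tau(x)$, so the coordinates of $\phi_K(x)$ and $\phi_K(y)$ differ by non-proportional scalings and $\phi_K(y)\neq\phi_K(x)$. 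Without this (or an equivalent) separation argument your proof is incomplete.

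There is also a circularity in your last sentence. Proposition \ref{prop.example.1} does not say that $\mathrm{Def}(X)$ is smooth: surjectivity of $\mathrm{ob}_X$ in the local-to-global sequence \eqref{seq.sing.X} only computes the tangent space $\dim\mathrm{Ext}^1(\Omega_X^1,\mathcal{O}_X)=44$. In the paper, smoothness of $\mathrm{Def}(X)$ is Proposition \ref{dim44}, whose proof \emph{uses} Proposition \ref{prop.defX} (via the smooth $44$-dimensional deformation space of $S_{2,4}$), so you cannot invoke it here. Fortunately you do not need it: once $\phi_K$ is shown to be a birational morphism, \cite[Proposition 6.2]{Cat97} does the rest, with no prior smoothness statement about $\mathrm{Def}(X)$ required.
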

\begin{proof}
By \cite[Proposition 6.2]{Cat97} it is sufficient to check that
the canonical map $\phi_{K} \colon X \to \mathbb{P}^4$ is a
birational morphism onto its image. Since $X$ has only Rational
Double Points and $u \colon X \to Q$ is a \emph{simple} $G$-cover,
Hurwitz formula yields $K_X = u^* \mathcal{O}_Q(1,\,1)$; but
 $|\mathcal{O}_Q(1,\,1)|$ is
base-point free, so $|K_X|$ is also base-point free and $\phi_K$
is a morphism.

It remains to show that $\phi_K$ separates two general points $x$,
$y$ on $X$. The decomposition of $u_* \omega_X$ with respect to
the $G$-action is
\begin{equation*}
u_* \omega_X = \omega_Q \oplus (\omega_Q \otimes L) \oplus
(\omega_Q \otimes L^2) \oplus (\omega_Q \otimes L^3),
\end{equation*}
where $L=\mathcal{O}_Q(1,\,1)$ and $\omega_Q \otimes L^i$ is the
eigensheaf corresponding to the character $\chi_i$. Therefore we
obtain
\begin{equation*}
H^0(u_*\omega_X)=H^0(\omega_Q \otimes L^2) \oplus H^0(\omega_Q
\otimes L^3).
\end{equation*}
Now let $\{\tau \}$ be a basis of $H^0(\omega_Q \otimes
L^2)=H^0(\mathcal{O}_Q)$ and let $\{\sigma_1, \, \sigma_2,
 \, \sigma_3, \, \sigma_4\}$ be a basis of $H^0(\omega_Q \otimes
L^3)=H^0(\mathcal{O}_Q(1,\,1))$. The four sections $\{\sigma_i\}$
provide an embedding $Q \hookrightarrow \mathbb{P}^3$, hence
$\phi_K$ separates pairs of points which belong to the same fibre of
$u\colon X \to Q$. Now let $x$, $y$ be two points in the same
(general) fibre of $u$. Then there exists $1 \leq a \leq 3$ such
that $y= \zeta^a \cdot x$. Then
\begin{equation*}
\sigma_i(y)=\zeta^a\sigma_i(x), \quad \tau(y)=\zeta^{2a} \tau(x),
\end{equation*}
that is
\begin{equation*}
\begin{split}
\phi_K(y)&=[\sigma_1(y)\colon \sigma_2(y) \colon \sigma_3(y)
\colon \sigma_4(y) \colon \tau(y)]\\
&=[\sigma_1(x)\colon \sigma_2(x)
\colon \sigma_3(x) \colon \sigma_4(x) \colon \zeta^a \tau(x)] \\
& \neq [\sigma_1(x)\colon \sigma_2(x) \colon \sigma_3(x) \colon
\sigma_4(x) \colon \tau(x)] = \phi_K(x).
\end{split}
\end{equation*}

Therefore $\phi_K$ also separates general pairs of points lying in
the same fibre of $u\colon X \to Q$ and we are done.
\end{proof}

Now we can prove the following
\begin{proposition} \label{dim44}
$\emph{Def}(X)$ is smooth at $[X]$, of dimension \emph{44}.
\end{proposition}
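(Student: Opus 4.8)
The plan is to prove that $\textrm{Def}(X)$ is smooth of dimension $44$ by combining the dimension count for $\textrm{Ext}^1(\Omega^1_X,\oo_X)$ coming from the exact sequence \eqref{seq.sing.X} with a lower bound on the actual dimension of $\textrm{Def}(X)$ obtained from the global-deformation picture established in Proposition \ref{prop.defX}. Recall the general strategy used throughout this section: one always has the inequalities
\begin{equation*}
\dim \textrm{Ext}^1(\Omega^1_X,\oo_X)=\dim T_{[X]}\textrm{Def}(X)\geq \dim_{[X]}\textrm{Def}(X),
\end{equation*}
so it suffices to show that the Zariski tangent space has dimension exactly $44$ and that $\textrm{Def}(X)$ has dimension \emph{at least} $44$ near $[X]$; the two bounds then force equality and smoothness.

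First I would compute $\dim\textrm{Ext}^1(\Omega^1_X,\oo_X)$ from the four-term sequence \eqref{seq.sing.X}. From Proposition \ref{prop.example.1} we already know $h^1(\Theta_X)=2$, $h^2(\Theta_X)=6$, and that $\textrm{ob}_X$ is surjective, so $\dim\textrm{coker}\,\textrm{ob}_X=0$ and $\dim\ker\textrm{ob}_X=\dim\mathcal{T}^1_X-6$. Since each of the sixteen points is of type $\frac{1}{4}(1,3)=A_3$, which has Tjurina number $3$, the skyscraper sheaf $\mathcal{T}^1_X$ has length $16\cdot 3=48$, whence $\dim\ker\textrm{ob}_X=42$. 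The sequence \eqref{seq.sing.X} then gives
\begin{equation*}
\dim\textrm{Ext}^1(\Omega^1_X,\oo_X)=h^1(\Theta_X)+\dim\ker\textrm{ob}_X=2+42=44.
\end{equation*}
This pins down the tangent space dimension.

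Next I would produce the matching lower bound on $\dim_{[X]}\textrm{Def}(X)$. Here I would invoke Proposition \ref{prop.defX}: the general deformation of $X$ is a smooth complete intersection $S_{2,4}\subset\mathbb{P}^4$ of bidegree $(2,4)$. The moduli of such complete intersections is computed by a standard parameter count — the dimension of the space of pairs (a quadric, a quartic) modulo the action of $\textrm{PGL}(5)$ and the rescalings that fix the common zero locus — and this family realizes a $44$-dimensional family of deformations of $X$ (equivalently, one reads off $\dim\textrm{Def}(S_{2,4})$ at a smooth fibre, which for a complete intersection is unobstructed and equals $h^1(\Theta_{S_{2,4}})=44$ by the earlier Riemann–Roch computation $h^1(\Theta_S)-h^2(\Theta_S)=44$ together with $h^2=6$). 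Since $X$ deforms to this smooth surface, the base $\textrm{Def}(X)$ has dimension at least $44$ near $[X]$.

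Combining the two, $44=\dim T_{[X]}\textrm{Def}(X)\geq\dim_{[X]}\textrm{Def}(X)\geq 44$ forces equality, and the coincidence of the tangent space dimension with the local dimension yields smoothness of $\textrm{Def}(X)$ at $[X]$. \emph{The main obstacle} I anticipate is justifying the lower bound cleanly: one must argue that the complete-intersection deformations of Proposition \ref{prop.defX} genuinely account for a $44$-dimensional piece of $\textrm{Def}(X)$ and are not artificially cut down by the $\textrm{PGL}(5)$-action or by identifications among deformed surfaces. The safest route is to verify that $S_{2,4}$ is unobstructed with $h^1(\Theta_{S_{2,4}})=44$ (so its Kuranishi space is smooth of that dimension) and that $X$ lies in the closure of this family, so that $\textrm{Def}(X)$ must have dimension at least $44$; the tangent-space computation then does the rest.
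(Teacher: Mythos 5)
Your overall strategy coincides with the paper's: compute $\dim T_{[X]}\mathrm{Def}(X)=\dim\mathrm{Ext}^1(\Omega^1_X,\oo_X)=44$ from the exact sequence \eqref{seq.sing.X}, using $h^1(\Theta_X)=2$, the surjectivity of $\mathrm{ob}_X$ onto $H^2(\Theta_X)\cong\CC^6$ and $\dim\mathcal{T}^1_X=16\cdot 3=48$, and then match this with a lower bound on $\dim_{[X]}\mathrm{Def}(X)$ coming from Proposition \ref{prop.defX} together with $\dim\mathrm{Def}(S_{2,4})=44$. The tangent-space half of your argument is correct and is exactly what the paper does.

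The gap is in your justification of $\dim\mathrm{Def}(S_{2,4})=44$. You assert that $h^1(\Theta_{S_{2,4}})=44$ follows ``from the Riemann--Roch computation $h^1(\Theta_S)-h^2(\Theta_S)=44$ together with $h^2=6$''. This is arithmetically inconsistent: if $h^2(\Theta_{S_{2,4}})$ were $6$, Riemann--Roch would give $h^1(\Theta_{S_{2,4}})=50$, your lower bound would become $50$, and the sandwich $44\geq\dim_{[X]}\mathrm{Def}(X)\geq 50$ would be contradictory rather than conclusive. The mistake is importing $h^2(\Theta)=6$, which is the value for $X$ and for its resolution $S$, to the smooth complete intersection: $h^2(\Theta)$ is not a deformation invariant, and in fact $h^2(\Theta_{S_{2,4}})=0$ (e.g.\ by combining the restricted Euler sequence with the normal bundle sequence, using that $\oo_{S_{2,4}}(k)$ has no intermediate cohomology), which is precisely what yields $h^1(\Theta_{S_{2,4}})=44$. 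Indeed the jump from $(h^1,h^2)=(44,0)$ on the general fibre to $(50,6)$ on $S$ is the reason why $\mathrm{Def}(S)$ is singular at $[S]$, as noted in the remark following the proposition. The paper sidesteps this entirely by computing
\begin{equation*}
\dim\mathrm{Def}(S_{2,4})=h^0(N_{S_{2,4}/\PP^4})-\dim\mathrm{Aut}(\PP^4)
=h^0(\oo_{S_{2,4}}(2))+h^0(\oo_{S_{2,4}}(4))-24=44,
\end{equation*}
citing \cite[Chapter 3]{Se06}; your alternative ``parameter count of pairs (quadric, quartic) modulo $\mathrm{PGL}(5)$'' gives the same number and is sound, but it is that count (or the vanishing $h^2(\Theta_{S_{2,4}})=0$) which you must actually carry out, in place of the erroneous cohomological shortcut.
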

\begin{proof}
By using Proposition \ref{prop.example.1} and exact sequence
\eqref{seq.sing.X} we obtain
\begin{equation} \label{ExtX}
\dim T_{[X]} \textrm{Def}(X)= \dim \textrm{Ext}^1(\Omega^1_X, \,
\oo_X)=44.
\end{equation}
On the other hand, by  \cite[Chapter 3]{Se06} one knows that
$\textrm{Def}(S_{2,4})$ is smooth, of dimension
\begin{equation*}
h^0(N_{S_{2,4}/\mathbb{P}^4})- \dim
\textrm{Aut}(\mathbb{P}^4)=h^0(\mathcal{O}_{S_{2,4}}(2))+h^0(\mathcal{O}_{S_{2,4}}(4))-24=44.
\end{equation*}
Equality \eqref{ExtX} and Proposition \ref{prop.defX} yield
\begin{equation} \label{eq.def-2.4}
%\begin{split}
 44 = \dim T_{[X]} \textrm{Def}(X) \geq \dim_{[X]} \textrm{Def}(X)
  = \dim_{[S_{2,4}]}
\textrm{Def}(S_{2,4})=44,
%\end{split}
\end{equation}
so we are done.
\end{proof}

\begin{remark}
Since $X$ has only rational double points, by \emph{{\cite{BW74}}}
the dimension of $\emph{Def}(X)$ equals the dimension of
$\emph{Def}(S)$. So we infer
\begin{equation*}
50= h^1(\Theta_S)= \dim T_{[S]} \emph{Def}(S) > \dim_{[S]}
\emph{Def}(S)=44,
\end{equation*}
that is $\emph{Def}(S)$ is \emph{singular} at $[S]$. By
\emph{\cite[Theorem 3.7]{BW74}}, this means that the sixteen
$A_3$-cycles of $S$ do not have independent behavior in
deformations.
\end{remark}

Proposition \ref{prop.defX} in particular shows that the general
deformation of $X$ does not preserve the $G$-action. Now we want
to consider some particular deformations that preserve the
quadruple cover $u \colon X \to Q$. According to \cite{Pa91} we
call them \emph{natural deformations}, and we freely follow the
notation of that paper everywhere. The building data of any
totally ramified $G$-cover $u \colon X \to Q$ are
\begin{equation} \label{building}
\begin{split}
4L_{\chi_1} & =  3D_{G, \chi_3}+D_{G, \chi_1}\\
2L_{\chi_2} & =  D_{G, \chi_1}+D_{G, \chi_3}\\
4L_{\chi_3} & =  D_{G, \chi_3}+3D_{G, \chi_1},
\end{split}
\end{equation}
see \cite[Proposition 2.1]{Pa91}. The $G$-cover $u \colon X \to Q$
defines a natural embedding $i$ of $X$ into the total space of the
vector bundle $W=\bigoplus_{\chi \in \widehat{G} \setminus
\{\chi_0\} } V(L_\chi^{-1})$. If $w_\chi$ is a local coordinate on
$V(L^{-1}_{\chi})$ on an open set $U$ and $\sigma_{G, \psi}$ is a
local equation for $D_{G, \, \psi}$ on $U$, then $i(X)$ is defined
by the equations
\begin{equation} \label{eq.embed.vec}
w_\chi w_{\chi'}=\bigg( \prod_{\psi \in \{\chi_1,
\chi_3\}}(\sigma_{G, \psi})^{\epsilon^{G, \psi}_{\chi, \chi'}}
\bigg)w_{\chi\chi'}
\end{equation}
and the covering map is given by the composition $\pi \circ i$,
where $\pi \colon W \to Q$ is the projection. Moreover, the
integers $\epsilon^{G, \psi}_{\chi, \chi'}$ can be easily computed
by using \cite[p. 196]{Pa91}:
\begin{equation} \label{eq.epsilon}
\begin{array}{lllll}
\epsilon^{G, \chi_1}_{\chi_0, \chi_0}=0, & \epsilon^{G,
\chi_1}_{\chi_0, \chi_1}=0, & \epsilon^{G, \chi_1}_{\chi_0,
\chi_2}=0, & \epsilon^{G, \chi_1}_{\chi_0, \chi_3}=0, &
\epsilon^{G, \chi_1}_{\chi_1, \chi_1}=0, \\
\epsilon^{G, \chi_1}_{\chi_1, \chi_2}=0, & \epsilon^{G,
\chi_1}_{\chi_1, \chi_3}=1, & \epsilon^{G, \chi_1}_{\chi_2,
\chi_2}=1, & \epsilon^{G, \chi_1}_{\chi_2, \chi_3}=1, &
\epsilon^{G, \chi_1}_{\chi_3, \chi_3}=1, \\
\epsilon^{G, \chi_3}_{\chi_0, \chi_0}=0, & \epsilon^{G,
\chi_3}_{\chi_0, \chi_1}=0, & \epsilon^{G, \chi_3}_{\chi_0,
\chi_2}=0, & \epsilon^{G, \chi_3}_{\chi_0, \chi_3}=0, &
\epsilon^{G, \chi_3}_{\chi_1, \chi_1}=1, \\
\epsilon^{G, \chi_3}_{\chi_1, \chi_2}=1, & \epsilon^{G,
\chi_3}_{\chi_1, \chi_3}=1, & \epsilon^{G, \chi_3}_{\chi_2,
\chi_2}=1, & \epsilon^{G, \chi_3}_{\chi_2, \chi_3}=0, &
\epsilon^{G, \chi_3}_{\chi_3, \chi_3}=0.
\end{array}
\end{equation}

Let us consider now a collection of sections
\begin{equation*}
\{r_{G, \psi, \chi} \in H^0(\mathcal{O}_Q(D_{G, \psi}) \otimes
L_{\chi}^{-1} ) \}_{\psi \in \{\chi_1, \chi_3 \}, \, \chi \in
S_{G, \psi}},
\end{equation*}
where
\begin{equation*}
S_{G, \chi_1}:=\{\chi_0, \, \chi_1, \chi_2 \}, \quad S_{G,
\chi_3}:=\{\chi_0, \, \chi_2, \chi_3 \}.
\end{equation*}
Let $h_{G, \psi, \chi}$ be a local representative of $r_{G, \psi,
\chi}$ on the open set $U$ and define
\begin{equation*}
\tau_{G, \psi}:= \sum_{\substack{\psi \in \{\chi_1, \chi_3 \} \\
\chi \in S_{G, \psi }}} h_{G, \psi, \chi}  w_{\chi}.
\end{equation*}
Then the natural deformation  of the $G$-cover $u \colon X\to Q$,
associated to the collection of sections $\{ r_{G, \psi, \chi} \}$,
is the subvariety $X'$ of $W$ locally defined by
\begin{equation*}
w_\chi w_{\chi'}= \bigg( \prod_{\psi \in \{\chi_1, \chi_3
\}}(\tau_{G, \psi})^{\epsilon^{G, \psi}_{\chi, \chi'}} \bigg)
w_{\chi\chi'},
\end{equation*}
 together with the map $u' \colon X'\to Q$ obtained by
restricting the projection $\pi \colon W \to Q$ to $X'$.

Coming back to our particular case, we have
\begin{equation*} D_{G, \chi_1} \in |\mathcal{O}_Q(4,
\,4)|, \quad D_{G, \chi_3}=0,
\end{equation*}
\begin{equation*}
 L_{\chi_1} \cong \mathcal{O}_Q(1,\, 1),
\quad L_{\chi_2} \cong \mathcal{O}_Q(2, \, 2), \quad L_{\chi_3}
\cong \mathcal{O}_Q(3, \, 3),
\end{equation*}
and $B=D_{G, \chi_1}$. Since $D_{G, \chi_3}=0$, the natural
deformations of $X$ are parameterized by the vector space
\begin{equation} %\label{nat.def}
 \bigoplus_{\chi\in S_{G, \chi_1} } H^0(\oo_Q(D_{G,
\chi_1})\otimes L^{-1}_\chi)
\end{equation}
\begin{equation*}
= H^0(\oo_Q(4, \, 4)) \oplus  H^0(\oo_Q(3, \, 3)) \oplus
H^0(\oo_Q(2, \, 2))\cong \mathbb{C}^{50}.
\end{equation*}

\subsection{Example where $\textrm{Sing}(X)=16\times\frac{1}{4}(1, 1)$}

Assume that, locally around each of the fixed points, the action
of $G = \langle \zeta \,| \, \zeta^4=1  \rangle$ is given by
$\zeta \cdot (x, \, y)=(\zeta x, \, \zeta y)$. In this case,
\begin{equation*}
\textrm{Sing}(X) = 16 \times \frac{1}{4}(1,1).
\end{equation*}
By using Proposition \ref{invariants-S}, we obtain
\begin{equation*}
p_g(S)=1, \quad q(S)=0, \quad K_S^2=-8,
\end{equation*}
hence $S$ is not a minimal model.

\begin{theorem} \label{teo.example.2}
The following holds:
\begin{itemize}
\item[$\boldsymbol{(i)}$] $h^2(\Theta_X)=14;$
\item[$\boldsymbol{(ii)}$] all natural deformations of $u \colon X
\to Q$ preserve the $16$
 points of type $\frac{1}{4}(1, \, 1);$
\item[$\boldsymbol{(iii)}$] there exists a $12$-dimensional family
of $\mathbb{Q}$-Gorenstein deformations of $X$, smoothing all the
singularities. The general element $X_t$ of this deformation is a
smooth, minimal surface of general type with $p_g(X_t)=1$,
$q(X_t)=0$ and $K_{X_t}^2=8;$ \item[$\boldsymbol{(iv)}$] $X_t$ is
isomorphic to a Todorov surface with $K^2=8$.
\end{itemize}
\end{theorem}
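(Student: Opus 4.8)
The plan is to prove the four parts in order, as they build on one another.

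For part $\boldsymbol{(i)}$, I would compute $h^2(\Theta_X)$ exactly as in the previous example. By Grothendieck duality one has $H^2(\Theta_X)^\ast = H^0(\Omega_Z^1 \otimes \Omega_Z^2)^G$, which by the Künneth formula splits into a direct sum over the characters $\chi \in \widehat{G}$ of terms $H^0({g_1}_*\omega_{C_1})^\chi \otimes H^0({g_2}_*\omega_{C_2}^2)^\chi$ and $H^0({g_1}_*\omega_{C_1}^2)^\chi \otimes H^0({g_2}_*\omega_{C_2})^\chi$. The difference from the previous subsection is that now the $G$-action is \emph{untwisted} (namely $\zeta \cdot (x,y) = (\zeta x, \zeta y)$), so the surviving eigensheaf contributions change: one reads off the relevant summands from the decomposition~\eqref{eq.decomp.f} with $\mathcal{M}_i = \mathcal{O}_{\mathbb{P}^1}(1)$, computes the dimension of each factor on $\mathbb{P}^1$, and adds up to obtain $14$.

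For part $\boldsymbol{(ii)}$, I would use the explicit description of natural deformations given by the building data~\eqref{building}, the embedding equations~\eqref{eq.embed.vec}, and the integers $\epsilon^{G,\psi}_{\chi,\chi'}$ in~\eqref{eq.epsilon}. The key point is to identify, for the untwisted action, which local terms $\tau_{G,\psi}$ can deform the singularity. The $\frac{1}{4}(1,1)$ singularities sit over the branch points, and one must check that the perturbing sections $r_{G,\psi,\chi}$ cannot alter the local analytic type there: the natural deformations only move the branch divisor $B=D_{G,\chi_1} \in |\mathcal{O}_Q(4,4)|$ and add lower-order terms, but by a local computation at each fixed point the $\frac{1}{4}(1,1)$ singularity persists. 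Hence natural (i.e.\ $G$-cover-preserving) deformations are equisingular at these $16$ points, which is what distinguishes this case and forces the use of genuinely $\mathbb{Q}$-Gorenstein (not natural) smoothings.

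Parts $\boldsymbol{(iii)}$ and $\boldsymbol{(iv)}$ are where the main work and the main obstacle lie. Since $\frac{1}{4}(1,1)$ is a singularity of class $T$, each admits a local $\mathbb{Q}$-Gorenstein smoothing; the hard part is gluing these into a \emph{global} one, which requires showing that the global-to-local obstruction vanishes and counting the dimension of the resulting family. I would set up the $\mathbb{Q}$-Gorenstein deformation theory via the sheaves of $\mathbb{Q}$-Gorenstein first-order deformations and their obstruction space, argue (using part $\boldsymbol{(ii)}$ to see that natural deformations do not contribute, so the smoothing directions are transverse to them) that the local smoothing parameters can be lifted, and thereby produce the $12$-dimensional family; the invariants $p_g(X_t)=1$, $q(X_t)=0$, $K_{X_t}^2=8$ follow because $\mathbb{Q}$-Gorenstein smoothings preserve $\chi(\mathcal{O})$ and $K^2$ while smoothing the $\frac{1}{4}(1,1)$ points raises $K^2$ from $-8$ to $8$ (each such singularity contributes $+1$ to $K^2$ under smoothing). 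Minimality and general type follow from $p_g>0$, $K^2>0$ together with the absence of $(-1)$-curves. Finally, to identify $X_t$ as a Todorov surface with $K^2=8$, I would compute the bicanonical or canonical map of $X_t$ and exhibit the defining double-cover structure over a Kummer surface, matching Todorov's construction; I expect the delicate step to be verifying that the smoothed surface carries the involution and branch configuration characterising Todorov surfaces, rather than merely sharing their numerical invariants.
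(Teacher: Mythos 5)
Your plan for part $\boldsymbol{(i)}$ is essentially the paper's (Grothendieck duality plus K\"unneth), though note a slip: with the untwisted action $\zeta\cdot(x,y)=(\zeta x,\zeta y)$ the $G$-invariants pair the $\chi$-eigenspace on the first factor with the $\chi^{-1}$-eigenspace on the second, not $\chi$ with $\chi$ as you wrote (that is the pairing for the twisted action of the previous subsection); with the correct pairing the surviving summands have dimensions $3+4+4+3=14$. Part $\boldsymbol{(ii)}$, however, contains a genuine error: you quote the building data of the \emph{previous} example, $B=D_{G,\chi_1}\in|\mathcal{O}_Q(4,4)|$, whereas in this example $D_{G,\chi_1}\in|\mathcal{O}_Q(4,0)|$, $D_{G,\chi_3}\in|\mathcal{O}_Q(0,4)|$ and $L_{\chi_1}\cong\mathcal{O}_Q(1,3)$, $L_{\chi_2}\cong\mathcal{O}_Q(2,2)$, $L_{\chi_3}\cong\mathcal{O}_Q(3,1)$. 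The actual reason natural deformations preserve the sixteen $\frac{1}{4}(1,1)$ points is not a local persistence check but a cohomological vanishing: $H^0(\mathcal{O}_Q(D_{G,\psi})\otimes L_{\chi}^{-1})=0$ for every nontrivial $\chi\in S_{G,\psi}$, so the natural deformations reduce to $H^0(\mathcal{O}_Q(4,0))\oplus H^0(\mathcal{O}_Q(0,4))\cong\mathbb{C}^{10}$, i.e.\ they only move the two pencils and keep $B$ of product type, hence keep its sixteen crossing points. With your data the conclusion would in fact be false: for a $(4,4)$ branch divisor the natural deformations (a $50$-dimensional family, as in the first example) do alter the singularities.

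The main gaps are in $\boldsymbol{(iii)}$ and $\boldsymbol{(iv)}$. You propose to "show the global-to-local obstruction vanishes", but give no mechanism, and this is exactly the hard point: $h^2(\Theta_X)=14\neq 0$, so no soft vanishing is available. The paper sidesteps obstruction theory entirely by an explicit construction: the local equations of the natural deformations are written as the rank~$\le 1$ condition on a $3\times 3$ matrix in $w_1,w_2,w_3,h_1,h_3$, and this determinantal representation is perturbed by a parameter $s\in H^0(L_{\chi_2})\cong\mathbb{C}^9$ added to the middle entry; for general $s$ the resulting surface is smooth, and this globalizes the standard local $\mathbb{Q}$-Gorenstein smoothing of $\frac{1}{4}(1,1)$. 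That determinantal trick is the central idea of the proof and is absent from your proposal. Minimality then follows because $4K_X=u^*\mathcal{O}_Q(4,4)$ is big and nef, hence so is $K_{X_t}$ --- not from an unproven "absence of $(-1)$-curves". Finally, the dimension count $12$ and the Todorov identification do not come from analyzing the canonical map of $X_t$: they come from the intermediate quotient $Y=Z/H$ of Section 4, whose deformation space is smooth of dimension $18$ and whose general deformation remains a double cover $Y_t\to A_t$ of an abelian surface branched along a $(4,4)$-polarization $\Xi$; the deformations for which the $(-1)$-involution $\iota_t$ lifts to $Y_t$ with smooth quotient form a family of dimension $3+\left(\tfrac{1}{2}h^0(\mathcal{O}_{A}(\Xi))+2-1\right)=3+9=12$, and $X_t=Y_t/\tilde{\iota}_t$ is then a double cover of $\mathrm{Kum}(A_t)\subset\mathbb{P}^3$ branched over the $16$ nodes and a smooth quadric section avoiding them, which is literally Todorov's construction. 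None of this structure (the surface $Y$, the lifting of the involution, the invariant-divisor count from [BL04]) appears in your proposal, so parts $\boldsymbol{(iii)}$ and $\boldsymbol{(iv)}$ remain essentially unproved.
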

\begin{proof}
$\boldsymbol{(i)}$ By using Grothendieck duality and
K$\ddot{\textrm{u}}$nneth formula as in Proposition
\ref{prop.example.1} we obtain
\begin{equation*}
\begin{split}
H^2(\Theta_X)^\ast &= H^0(\Omega_Z^1 \otimes \Omega_Z^2)^G \\
 &=  \bigoplus_{\chi \in \widehat G}\big[ (H^0({g_1}_*\omega_{C_1})^\chi \otimes H^0({g_2}_* \omega_{C_2}^2)^{\chi^{-1}}) \\
& \quad \oplus  (H^0({g_1}_* \omega_{C_1}^2)^\chi \otimes H^0({g_2}_*\omega_{C_2})^{\chi^{-1}}) \big] \\
&=  (H^0(\oo_{\mathbb{P}^1})\otimes H^0(\oo_{\pp^1}(2))) \oplus (H^0(\oo_{\pp^1}(1))\otimes H^0(\oo_{\pp^1}(1)))\\
& \quad \oplus (H^0(\oo_{\pp^1}(1))\otimes H^0(\oo_{\pp^1}(1)))\oplus
(H^0(\oo_{\pp^1}(2))\otimes H^0(\oo_{\pp^1})),
\end{split}
\end{equation*}
which yields $h^2(\Theta_X)=14$.

$\boldsymbol{(ii)}$ The $G$-cover $u \colon X\to
 Q$ is determined by  the building data \eqref{building}, with
\begin{equation*}
D_{G, \chi_1} \in |\mathcal{O}_Q(4, \, 0)|, \quad D_{G, \chi_3}
\in |\mathcal{O}_Q(0, \,4)|,
\end{equation*}
\begin{equation*}
 L_{\chi_1} \cong \mathcal{O}_Q(1, \, 3),
\quad L_{\chi_2} \cong \mathcal{O}_Q(2, \, 2), \quad L_{\chi_3}
\cong \mathcal{O}_Q(3, \, 1).
\end{equation*}
The natural deformations of $u$ are parameterized by the vector
space
\begin{equation} %\label{nat.def-1}
\begin{split}
& \bigoplus_{\psi \in \{\chi_1, \chi_3 \}} \bigg(
\bigoplus_{\chi\in S_{G, \psi} } H^0(\oo_Q(D_{G, \psi})\otimes
L^{-1}_\chi) \bigg) \\
& = H^0(\oo_Q(4, \, 0))\oplus H^0( \oo_Q(0, \, 4)).
\end{split}
\end{equation}
%where
%\begin{equation*}
%S_{G, \chi_1}:=\{\chi_0, \, \chi_1, \chi_2 \}, \quad S_{G, \chi_3}:=\{\chi_0, \, \chi_2, \chi_3 \},
%\end{equation*}

Therefore they form a family of dimension $10$, which is exactly
the one obtained by keeping the branch divisor $B \subset Q$ of
product type. In particular, all the natural deformations preserve
the sixteen singular points of $X$.

$\boldsymbol{(iii)}$ For simplicity, set $w_i=w_{\chi_i}$ and
$\tau_{G, \chi_i}=h_i w_0$.  Writing $w_0=1$, the local equations
defining the family of natural deformations of $u \colon X \to Q$
are the following:
\begin{equation} \label{eq.nat.def.X}
\begin{array}{lll}
w_1^2=h_3w_2, & w_1w_2=h_3w_3, & w_1w_3=h_1 h_3, \\
w_2^2=h_1h_3, & w_2w_3=h_1w_1, & w_3^2=h_1w_2.
\end{array}
\end{equation}

Relations \eqref{eq.nat.def.X} can be written in determinantal
form in two different ways, namely
\begin{equation*}
\begin{split}
\boldsymbol{(a)} & \; \; {\rm rank}\left(
\begin{array}{cccc}
w_2 & w_3 & w_1 & h_1 \\
w_1 & w_2 & h_3 & w_3 \\
\end{array}\right) \le 1, \\
\boldsymbol{(b)} & \; \; {\rm rank}\left(
\begin{array}{ccc}
h_3 & w_1 & w_2  \\
w_1 & w_2 & w_3 \\
w_2 & w_3 & h_1 \\
\end{array}\right) \le 1.
\end{split}
\end{equation*}

In the sequel we will only consider the determinantal
representation $\boldsymbol{(b)}$. We can deform it by using the
parameter $s \in H^0(L_{\chi_2})=\mathbb{C}^9$, i.e.
\begin{equation} \label{mat.smoothing}
{\rm rank}\left(
\begin{array}{ccc}
h_3 & w_1 & w_2  \\
w_1 & w_2+s & w_3 \\
w_2 & w_3 & h_1 \\
\end{array}\right)
 \le 1.
\end{equation}
It is no difficult to check that for general $s \neq 0$ one obtains
a smooth surface, hence \eqref{mat.smoothing} provides a smoothing
$\pi \colon \mathcal{X} \to T$ of $X$. This is actually a
$\qq$-Gorenstein smoothing of $X$, since it is the globalization of
the local $\mathbb{Q}$-Gorenstein smoothing of the quotient
singularity $\frac{1}{4}(1, \, 1)$, see \cite[Chapter 4]{Man08}.
Therefore the general fibre $X_t$ of $\pi$ is a surface of general
type whose invariants are
\begin{equation*}
p_g(X_t)=1, \quad q(X_t)=0, \quad K_{X_t}^2=8.
\end{equation*}
The canonical divisor $K_X$ is big and nef (since
$4K_X=u^*\oo_Q(4,\, 4)$), so $K_{X_t}$ is big and nef too, as
$X_t$ is obtained by a $\mathbb{Q}$-Gorenstein smoothing of $X$.
This shows that $X_t$ is a minimal model.

In order to give a more concrete description of $X_t$, let us look
again at the double cover $v \colon Y \to E_1 \times E_2$
constructed in Section \ref{sec.example.Y}. By Proposition
\ref{prop.def(Y)} we know that $\rm{Def}(Y)$ is smooth at $[Y]$ of
dimension $18$; moreover the general deformation $Y_t$ of $Y$ is a
double cover $v_t \colon Y_t \to A_t$ of an abelian variety $A_t$,
branched on a smooth divisor $\Xi$ which is a polarization of type
$(4,\,4)$. Let us compute the dimension of the subspace of
$\rm{Def}(Y)$ consisting of surfaces for which it is possible to
lift the natural involution $\iota_t \colon A_t \to A_t$ to an
involution $\tilde{\iota}_t \colon Y_t \to Y_t$ such that
$Y_t/\tilde{\iota}_t$ is smooth. By \cite[Corollary 4.7.6]{BL04},
the divisor $\Xi$ does not contain any of the $16$ fixed points of
$\iota_t$. If we write locally the equation of
 the double cover $v_t \colon Y_t \to A_t$ as $z^2=f(x, \, y)$ so that
 $\iota_t$ is given by $(x, \, y) \to (-x, \, -y)$, we see that
$\iota_t$ lifts to $Y_t$ if an only if the branch locus $f(x, \,
y)=0$ is $\iota_t$-invariant; moreover in this case there is a
unique lifting such that the quotient is smooth; it is locally
given by $(x, \, y, \, z) \to (-x, -y, -z)$. By \cite[Corollary
4.6.6]{BL04}, the divisors in $|\Xi|$ which are invariant under
$\iota_t$ form a family of dimension
$\frac{1}{2}h^0(\oo_A(\Xi))+2-1=9$ and so, taking into account the
three moduli of abelian surfaces, we obtain a $12$-dimensional
family $\{Y_t\}$ of deformations of $Y$ which admit a lifting of
$\iota_t$.

One can further check that the lifted involution $\tilde{\iota}$
is fixed-point free and that the family $\{X_t \}$ constructed
before can be obtained as $X_t=Y_t/\tilde{\iota_t}$.

$\boldsymbol{(iv)}$ Let us consider the Kummer surface
$\textrm{Kum}(A_t):=A_t/\iota_t$. By $\boldsymbol{(iii)}$ a
general fibre $X_t$ of the $\mathbb{Q}$-Gorenstein smoothing of
$X$ is a double cover of $\textrm{Kum}(A_t)$ branched over the
$16$ nodes of $\textrm{Kum}(A_t)$ and the image $D$ of the curve
$\Xi$.

On the other hand, $\textrm{Kum}(A_t)$ can be embedded in
$\mathbb{P}^3$ as a quartic surface with $16$ nodes and via this
embedding the curve $D$ is obtained by intersecting
$\textrm{Kum}(A_t)$ with a smooth quadric surface $\Phi$ which
does not contain any of the nodes.

This shows that $X_t$ belongs precisely to the family of surfaces
with $p_g=1$, $q=0$ and $K^2=8$ constructed by Todorov in
\cite{To81}.
\end{proof}

\begin{remark} \label{rem.Todorov}
Let us fix the abelian surface $A$ and the embedding
$\emph{Kum}(A) \hookrightarrow \mathbb{P}^3$. Then the choice of
the deformation parameter $s \in H^0(L_{\chi_2})$ corresponds to
the choice of the quadric surface $\Phi \in
|\mathcal{O}_{\mathbb{P}^3}(2)|$. By \emph{\cite[Lemma 2.1]{To81}}
there is a quadric surface $\Phi_k$ in $\PP^3$ which contains
exactly $k$ $(1\le k\le 6)$ of the nodes of $\emph{Kum}(A)$ that
are general position. This means that the pullback in $A$ of the
curve $D_k:=\emph{Kum}(A) \cap \Phi_k$ is a polarization of type
$(4, \, 4)$ which contains exactly $k$ of the fixed points of
$\iota \colon A \to A$.

Therefore arguments similar to those used in the proof of Theorem
\emph{\ref{teo.example.2}}, part $\boldsymbol{(ii)}$ show that
there exists a partial $\QQ$-Gorenstein smoothing of $X$, whose
general fibre $X_t$ is isomorphic to the double cover of
$\emph{Kum}(A)$ branched over the curve $D_k$ and the remaining
$16-k$ nodes of $\emph{Kum}(A)$. The surface $X_t$ is not smooth,
since it contains exactly $k$ singular points of type
$\frac{1}{4}(1, \, 1)$. Its minimal resolution of singularities is
a Todorov surface with
 $K^2=8-k$ $(1\le k\le 6)$.
\end{remark}

%%%%%%%%%%%%%%%%%%%%%%%%%
%%%%%%%%%%%%%%%%%%%%%%%%%

\subsection{Example where  $\textrm{Sing}(X)=8\times\frac{1}{4}(1, 3)+ 8\times\frac{1}{4}(1, 1)$}

We can also twist the action of $G$ on $Z$ in such a way that
\begin{equation*}
\textrm{Sing}(X) = 8 \times \frac{1}{4}(1,1)+8 \times
\frac{1}{4}(1,3).
\end{equation*}

By using Proposition \ref{invariants-S}, we obtain
\begin{equation*}
p_g(S)=3, \quad q(S)=0, \quad K_S^2=0,
\end{equation*}
hence $S$ is not a minimal model.

Rasdeaconu and Suvaina  give an explicit construction of $S$ in
\cite[Section 3]{RS06}, showing that it is a simply connected,
minimal, elliptic surface with no multiple fibers. One can also
prove that $H^2(\Theta_X)\ne 0$, see \cite[Section 3]{LP11}.

\begin{proposition} \label{pro.example.3}
The following holds:
\begin{itemize}
\item[$\boldsymbol{(i)}$] all natural deformations of $X$ preserve the $8$
 points of type $\frac{1}{4}(1, \, 1);$
\item[$\boldsymbol{(ii)}$] there exists a family of
$\mathbb{Q}$-Gorenstein deformations of $X$, smoothing all the
singularities. The general element of this family is a smooth,
minimal surface of general type with $p_g=3$, $q=0$ and $K^2=8$.
\end{itemize}
\end{proposition}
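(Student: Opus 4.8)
The plan is to treat the two claims in parallel with the corresponding arguments of Theorem \ref{teo.example.2}, since the structure of the $G$-cover is identical; only the building data change because the action is now twisted so as to produce a mixed singular locus. First I would write down the explicit building data for this configuration: here one expects $D_{G, \chi_1} \in |\oo_Q(4, \, 0)|$ and $D_{G, \chi_3} \in |\oo_Q(0, \, 4)|$ (or a symmetric choice compatible with $B=B_1 \times B_2$) together with the corresponding line bundles $L_{\chi_1}, L_{\chi_2}, L_{\chi_3}$, exactly as in part $\boldsymbol{(ii)}$ of Theorem \ref{teo.example.2}. The eight singularities of type $\frac{1}{4}(1, \, 3)$ are rational double points of type $A_3$, so they are automatically preserved by natural deformations and are harmless; the delicate points are the eight singularities of type $\frac{1}{4}(1, \, 1)$, which are the ones of class $T$ requiring a genuine $\QQ$-Gorenstein smoothing.

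For part $\boldsymbol{(i)}$ I would parametrize the natural deformations by the vector space $\bigoplus_{\psi \in \{\chi_1, \chi_3 \}} \bigoplus_{\chi \in S_{G, \psi}} H^0(\oo_Q(D_{G, \psi}) \otimes L_{\chi}^{-1})$ as in \eqref{building}. The key computation is to check that, for the bidegrees dictated by this mixed configuration, every section $r_{G, \psi, \chi}$ entering a $\tau_{G, \psi}$ is forced to keep the branch divisor of product type near the eight $\frac{1}{4}(1, \, 1)$-points, so that the defining equations \eqref{eq.nat.def.X} continue to vanish to the required order at those points. Concretely, one verifies that no section in the relevant $H^0$ can introduce the monomial needed to separate the sheets at a $\frac{1}{4}(1, \, 1)$-singularity, precisely because the contribution $D_{G, \chi_3}=0$ that allowed smoothing in the preceding subsection is here replaced by a divisor concentrated on the other factor. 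This yields that all natural deformations preserve the eight $\frac{1}{4}(1, \, 1)$-points, which also explains the announced obstruction $H^2(\Theta_X) \ne 0$.

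For part $\boldsymbol{(ii)}$, since the natural deformations alone cannot smooth the $T$-singularities, the plan is the bidouble-cover strategy sketched in the introduction. I would first pass to a general surface $\bar X$ in the subfamily of natural deformations and exhibit it as deformable to a bidouble cover of $Q=\PP^1 \times \PP^1$ branched over three smooth divisors of bidegree $(2, \, 2)$; this uses the standard theory of abelian (here $\ZZ_2 \times \ZZ_2$) covers and their natural deformations, again following \cite{Pa91}. The three $(2, \, 2)$-divisors carry the $\ZZ_2 \times \ZZ_2$ building data, and one checks by Hurwitz that the resulting smooth bidouble cover has $p_g=3$, $q=0$ and $K^2=8$. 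Taking a general deformation of the three branch divisors then moves the $\frac{1}{4}(1, \, 1)$-points into smooth points, realizing the global $\QQ$-Gorenstein smoothing as the globalization of the local $\QQ$-Gorenstein smoothings of the singularities of class $T$, exactly in the spirit of \cite[Chapter 4]{Man08}. Finally, $K_X$ being big and nef (as $4K_X=u^*\oo_Q(4, \, 4)$) forces $K_{X_t}$ to be big and nef, so the general fibre is a minimal surface of general type with the stated invariants.

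The main obstacle I expect is reconciling part $\boldsymbol{(i)}$ with part $\boldsymbol{(ii)}$: one must smooth the eight $\frac{1}{4}(1, \, 1)$-points even though no natural deformation of the $G$-cover achieves this and $H^2(\Theta_X) \ne 0$ signals an obstruction. The heart of the proof is therefore to show that the obstruction actually \emph{vanishes} for the $\QQ$-Gorenstein smoothing — equivalently, that the local-to-global map controlling the $\QQ$-Gorenstein deformations is unobstructed even though the full deformation space is not. Passing to the bidouble-cover model is precisely what circumvents this: it exhibits an explicit algebraic family transverse to the natural-deformation locus along which the class-$T$ singularities disappear, so the required smoothing exists despite the non-vanishing of $H^2(\Theta_X)$.
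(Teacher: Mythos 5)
Your proposal starts from the wrong building data, and this breaks part $\boldsymbol{(i)}$ entirely. You take $D_{G, \chi_1} \in |\oo_Q(4, \, 0)|$ and $D_{G, \chi_3} \in |\oo_Q(0, \, 4)|$, ``exactly as in part $\boldsymbol{(ii)}$ of Theorem \ref{teo.example.2}'' --- but those are the data of the \emph{second} example, where $\textrm{Sing}(X)=16 \times \frac{1}{4}(1,\,1)$; they cannot produce the mixed configuration $8 \times \frac{1}{4}(1,\,1)+8 \times \frac{1}{4}(1,\,3)$ at all. For the twisted action of this example one has $D_{G, \chi_1}, \, D_{G, \chi_3} \in |\oo_Q(2, \, 2)|$ and $L_{\chi_1}, L_{\chi_2}, L_{\chi_3} \cong \oo_Q(2,\,2)$, and the eight $\frac{1}{4}(1,\,1)$-points sit over the base points $D_{G, \chi_1} \cap D_{G, \chi_3}$ of the pencil these two divisors generate. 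The correct proof of $\boldsymbol{(i)}$ writes the natural deformations explicitly (equations \eqref{eq.ex3}): for general parameters the deformed cover $\bar{u} \colon \bar{X} \to Q$ is \emph{not} Galois, its branch locus is a sum $D_1 + \cdots + D_6$ of members of the pencil, and $\bar{X}$ is singular exactly at the $8$ points over the base locus. In particular two of your supporting claims are false: the natural deformations do \emph{not} keep the branch divisor of product type, and the eight $A_3$-points are \emph{not} ``automatically preserved'' --- on the contrary, the general natural deformation smooths them (compare the first example, where natural deformations smooth all sixteen $A_3$-points), and it is precisely the $\frac{1}{4}(1,\,1)$-points at the base locus that persist.

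For part $\boldsymbol{(ii)}$ your outline (deform $\bar{X}$ to a bidouble cover of $Q$ branched on three $(2,\,2)$-divisors, then move the branch divisors) does match the paper's strategy, but as written it is only an announcement, and it cannot even be set up from your building data, since the third branch divisor $D_{G,\chi_2}$ must be a general member of the pencil generated by $D_{G,\chi_1}$ and $D_{G,\chi_3}$ --- a pencil that does not exist in your $(4,0)/(0,4)$ configuration. The substantive steps you would still need are: the factorization $\bar{X} \to K \stackrel{p}{\to} Q$ of the $G$-invariant natural deformations through an $8$-nodal $K3$ surface $K$; the linear equivalence $\bar{D}_{G,\chi_2} \sim \bar{D}_{G,\chi_1}+\bar{D}_{G,\chi_3}$ on $K$ (which uses simple connectedness of $K3$ surfaces, since $p^*D_{G,\chi_i}=2\bar{D}_{G,\chi_i}$ for $i=1,3$ while $p^*D_{G,\chi_2}=\bar{D}_{G,\chi_2}$); the resulting deformation equivalence of $\bar{X}$ with the double cover $\tilde{X}$ of $K$ branched on $\bar{D}_{G,\chi_2}$, which is the bidouble cover of $Q$; and the verification that all these deformations are $\QQ$-Gorenstein, done in the paper via the canonical divisor formula for double covers (showing $K_{\mathcal{X}}$ is $\QQ$-Cartier) together with transitivity of $\QQ$-Gorenstein deformations. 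Your closing paragraph, proposing to show that ``the obstruction actually vanishes'' for $\QQ$-Gorenstein deformations, is not what the paper does and is not a proof: the point of the construction is to exhibit the smoothing by explicit geometry, sidestepping $H^2(\Theta_X)\ne 0$ rather than proving any unobstructedness statement.
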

\begin{proof}
$\boldsymbol{(i)}$ The abelian $G$-cover $u \colon X \to Q$ is
determined by the building data \eqref{building}, with
\begin{equation*}
D_{G, \chi_1}, \, D_{G, \chi_3},  \in |\oo_Q(2, \, 2)|.
\end{equation*}
\begin{equation*}
L_{\chi_1}, \, L_{\chi_2}, L_{\chi_3} \cong \oo_Q(2, \, 2).
\end{equation*}

The same argument of Theorem \ref{teo.example.2}, part
$\boldsymbol{(ii)}$ shows that the natural deformations of $X$ are
parameterized by the vector space
\begin{equation*}
\begin{split}
& H^0(\oo_Q(2, \, 2))\oplus H^0( \oo_Q(2, \, 2)) \\
 \oplus & H^0(\oo_Q) \oplus H^0(\oo_Q) \oplus H^0(\oo_Q) \oplus
H^0(\oo_Q).
\end{split}
\end{equation*}
Writing $w_i:=w_{\chi_i}$ we have
\begin{equation*}
h_1=g_1+c_1w_1 + c_2 w_2, \quad h_3=g_3+d_2w_2+d_3w_3,
\end{equation*}
where $g_i$ a local equations of $D_{G, \, \chi_i}$ and $c_i, \, d_i
\in \mathbb{C}$. Therefore the equations of the natural deformations
of $X$ are
\begin{equation} \label{eq.ex3}
\begin{split}
w_1^2& =(g_3+d_2w_2+d_3w_3)w_2, \\
w_1w_2& =(g_3+d_2w_2+d_3w_3)w_3, \\
w_1w_3&=(g_1+c_1w_1+c_2w_2)(g_3+d_2w_2+d_3w_3), \\
w_2^2& =(g_1+c_1w_1+c_2w_2)(g_3+d_2w_2+d_3w_3),\\
w_2w_3&=(g_1+c_1w_1+c_2w_2)w_1, \\
w_3^2&=(g_1+c_1w_1+c_2w_2)w_2.
\end{split}
\end{equation}
For a general choice of the parameters the morphism $\bar{u} \colon
\bar X \to Q$ is \emph{not} a Galois cover and an easy computation
shows that its branch locus is of the form
\begin{equation*}
D_{\bar{X}}=D_1+ \ldots +D_6
\end{equation*}
where the $D_i$ belong to the pencil generated by $D_{G, \, \chi_1}$
and $D_{G, \, \chi_3}$.  Then the singular locus of $D_{\bar{X}}$ is
given by the $8$ points $D_{G, \, \chi_1} \cap D_{G, \, \chi_3}$ and
$\textrm{Sing}(\bar{X})$ consists of the $8$ points of type
$\frac{1}{4}(1,\,1)$ locally defined by setting
\begin{equation*}
g_1=g_3=w_1=w_2=w_3=0
\end{equation*}
in \eqref{eq.ex3}.

$\boldsymbol{(ii)}$ We note that the set of natural deformations
$\bar X$ of $X$ which keep the $G$-action is parameterized by the
vector space $H^0(\oo_Q(2,\,2)) \oplus H^0(\oo_Q(2,\,2))$. In fact,
the action of the generator $i=\sqrt{-1}$ of $G$ must be given by
\begin{equation*}
w_1 \mapsto -iw_1, \quad w_2 \mapsto -w_2, \quad w_3 \mapsto iw_3
\end{equation*}
and substituting  in \eqref{eq.ex3} we obtain $c_1=c_2=d_1=d_3=0$.

The $G$-cover $ \bar X\to Q$ factors into two double covers
\begin{equation*}
\bar X\to K\stackrel{p}{\rightarrow} Q
\end{equation*}
where $K$ is a $K3$ surface with $8$ ordinary double points and $p
\colon  K\to Q$ is a double cover branched over $D_{G, \chi_1}+D_{G,
\chi_3}$. Let $D_{G, \chi_2}$ be a general member in the pencil
induced by $D_{G, \chi_1}$ and $D_{G, \chi_3}$. Let $\bar D_{G,
\chi_2}=p^*D_{G, \chi_2}$ and $2\bar D_{G, \chi_i}=p^*D_{G, \chi_i}$
for $i=1, 3$. Since $D_{G, \chi_2}$ is linearly equivalent to $D_{G,
\chi_i}$ for $i=1, 3$ and a $K3$ surface is simply connected, $\bar
D_{G, \chi_2}$ is linearly equivalent to $\bar D_{G, \chi_1}+\bar
D_{G, \chi_3}$. Note that both these curves have exactly 8 nodes.
The double cover $\tilde X$ of $K$ branched over $\bar D_{G,
\chi_2}$ is deformation equivalent to $\bar X$, and $\tilde X$ can
be realized as the bidouble cover of $Q$ branched over $D_{G,
\chi_1}$, $D_{G, \chi_3}$ and $D_{G, \chi_2}$. Therefore if one
deforms $D_{G, \chi_2}$ to a general divisor of bidegree $(2, 2)$ we
have a $\QQ$-Gorenstein smoothing of $\tilde X$ which smoothes all
the singularities. Since $\bar X$ is a deformation of $X$ and
$\tilde X$ is deformation equivalent to $\bar X$, we have a smooth
projective surface in the deformation space of $X$ which is a
$\QQ$-Gorenstein smoothing of $\tilde X$. Finally, we note that each
deformation is a $\QQ$-Gorenstein one. In fact, $\tilde X$ and $\bar
X$ are double covers of the $K3$ surface $K$ branched over $\bar
D_{G, \chi_2}$ and $\bar D_{G, \chi_1}+\bar D_{G, \chi_3}$,
respectively. Let ${\mathcal X} \to \Delta$ be a family of double
covers of $K$ obtained deforming the branch locus from $\bar D_{G,
\chi_1}+\bar D_{G, \chi_3}$ to $\bar D_{G, \chi_2}$. By using the
canonical divisor formula for a double cover, it is not hard to see
that $K_{\mathcal X}$ is a $\QQ$-Cartier divisor. Therefore the
transitive property of $\QQ$-Gorenstein deformations implies that
$X$ has a $\QQ$-Gorenstein smoothing.
\end{proof}

\begin{remark} \label{rem.partial smoothing}
By applying arguments similar to those used in Remark
\emph{\ref{rem.Todorov}} and in \emph{\cite[Section 2]{Lee10}}, one
can construct surfaces of general type with $p_g=3$, $q=0$ and
$K^2=k$ $(2\le k\le 8)$ by first taking a $\QQ$-Gorenstein smoothing
of $k$ singular points of type $\frac{1}{4}(1, \, 1)$ of $\bar X$
and then the minimal resolution of the remaining $8-k$ singular
points of the same type.
\end{remark}

%%%%%%%%%%%%%%%%%%%%%%%%%%%%%%%%%
% References
%%%%%%%%%%%%%%%%%%%%%%%%%%%%%%%%%

\end{document}